\documentclass[11pt]{article}
\usepackage{graphicx}
\usepackage{amsmath,amssymb,amsthm,amsfonts,mathrsfs}
\usepackage{amssymb}
\usepackage{enumerate}
\usepackage[numbers,sort&compress]{natbib}
\usepackage{amsfonts}
\usepackage{a4wide}

\allowdisplaybreaks

\numberwithin{equation}{section}

\newtheorem{theorem}{Theorem}[section]

\newtheorem{corollary}[theorem]{Corollary}
\newtheorem{lemma}[theorem]{Lemma}

\theoremstyle{definition}

\newtheorem{remark}[theorem]{Remark}

\newcommand{\R}{\mathbb{R}}

\newcommand{\bi}{\nabla}

\usepackage{hyperref}
\hypersetup{colorlinks=true,
citecolor=black,
linkcolor=black,
}
\usepackage{amsmath}
\allowdisplaybreaks[4]

\usepackage[notref,notcite]{showkeys}

\begin{document}
\title{Existence and multiplicity of normalized solutions for the quasi-linear Schr\"{o}dinger equations with mixed nonlinearities
\thanks {The research is  supported by Guangxi Natural Science Foundation(2025GXNSFFA069011) and  National Natural Science Foundation of China (12061012, 12461022).}
}

\author{  Qihan  He\thanks{School of Mathematics and Information Science \& Center for Applied Mathematics of Guangxi (Guangxi University), Guangxi University, Nanning, Guangxi, P. R. China. Email: \texttt{heqihan277@163.com}.}
  \ \
and
  \ \ Hao Wang \thanks{School of Mathematics and Information Science, Guangxi University, Nanning, Guangxi, P. R. China.  Email: \texttt{wanghao11226@163.com}.}
}

\date{}

\maketitle
\maketitle

\begin{abstract}
In this paper, we study the existence and multiplicity of the normalized solutions to the following quasi-linear problem
\begin{equation*}
-\Delta u-\Delta(|u|^2)u+\lambda u=|u|^{p-2}u+\tau|u|^{q-2}u, \text{ in }\R^N,~ 1\leq N\leq4,
\end{equation*}
with prescribed mass
$$\int_{\R^N}|u|^2dx=a ,$$
where $\lambda\in\R$ appears as a Lagrange multiplier and the parameters $a,\tau$ are all positive constants. We are concerned about the mass-mixed case $2<q<2+\frac{4}{N}$ and $4+\frac{4}{N}<p<2\cdot2^*$, where $2^*:=\frac{2N}{N-2}$ for $N\geq3$, while $2^*:=\infty$ for $N=1,2$. We show the existence of normalized ground state solution and normalized solution of mountain pass type. Our results can be regarded as a supplement to Lu et al. ( Proc. Edinb. Math. Soc., 2024) and Jeanjean et al. ( arXiv:2501.03845).
\end{abstract}

\vskip 0.12truein

\noindent {\it Keywords:} Quasilinear Schr\"{o}dinger equation; Multiplicity; Normalized solutions

\vskip 0.08truein

\noindent{\it{2020 Mathematics Subject Classification:}}  35A15, 35J62.

\vskip 0.2truein

\section{Introduction}
\qquad In this paper, we study the following time-dependent quasi-linear Schr\"{o}dinger equations
\begin{equation}\label{1.100}
\begin{aligned}
\left\{
  \begin{array}{ll}
    i\partial_t \phi+\Delta\phi+\Delta(|\phi|^2)\phi
+|\phi|^{p-2}\phi+\tau|\phi|^{q-2}\phi=0,\text{ in } \R^+\times\R^N , \\
    \phi(0,x)=\phi_0(x),\text{ in }\R^N,
  \end{array}
\right.
\end{aligned}
\end{equation}
where $N\geq1$, $\phi:\R^N\rightarrow\mathbb{C}$ is a complex valued function. These types of equations have extensive applications across diverse physical disciplines, such as dissipative quantum mechanics, plasma physics, and fluid mechanics. We refer the readers to \cite{CM1,KS,PM} and their references for more information on the related physical backgrounds.

From the perspective of physics and mathematics, a core problem is the existence and dynamics of standing waves of \eqref{1.100}. We call  a solution of the form $\phi(t,x)=e^{-i\lambda t}u(x)$ as a standing wave, where $\lambda\in\R$ is a parameter. Indeed, $\phi(t,x)$ is a solution of \eqref{1.100} if and only if $u(x)$ satisfies the following quasi-linear Schr\"{o}dinger equation
\begin{equation}\label{1.1}
-\Delta u-\Delta(|u|^2)u+\lambda u=|u|^{p-2}u+\tau|u|^{q-2}u, \text{   in }\R^N.
\end{equation}

When seeking a solution to \eqref{1.1}, one option is to treat $\lambda\in\R$ as a fixed constant, which is called as  the fixed frequency problem. In this case, the existence and multiplicity of solutions to \eqref{1.1} have been intensively studied during the past decades (see \cite{AZ,CLZ,CM,FX,LJ,LJ1,LJ2,LJ3,LJ4,LX,LX1,ZX,ZH} and their references therein). Studying the fixed frequency problem  \eqref{1.1}, ones  commonly explore the following energy functional
\begin{equation*}
I_{\lambda}(u)=\frac{1}{2}(|\bi u|_2^2+\lambda|u|_2^2)+\int_{\R^N}u^2|\bi u|^2dx-\frac{1}{p}|u|_p^p-\frac{\tau}{q}|u|_q^q
\end{equation*}
on the space
\begin{equation*}
X:=\{u\in H^1(\R^N)\mid V(u):=\int_{\R^N}u^2|\bi u|^2dx<\infty\}.
\end{equation*}
It is easy  to check that $u$ is a weak solution to \eqref{1.1} if and only if for any $\varphi\in\mathcal{C}_0^\infty(\R^N)$,
$$
\langle I_{\lambda}'(u), \varphi\rangle:=\lim\limits_{t\rightarrow 0^+}\frac{I_{\lambda}(u+t\varphi)-I_{\lambda}(u)}{t}=0.
$$
Unlike semi-linear equations (where the term $\Delta(|u|^2)u$ is absent), finding solutions for the equation \eqref{1.1} is challenging. The functional related to the quasi-linear term $V(u)$ is non-differentiable in the  space $X$ when $N\geq2$. To tackle this, several arguments have been devised. Initially, in \cite{LJ,PM}, solutions of \eqref{1.1} are acquired by minimizing the functional $I_{\lambda}$ on the  set
$$
\{u\in X\mid\int_{\R^N}|u|^{p+1}dx=1\}.
$$
In the proofs of \cite{LJ,PM}, the non-differentiability of $I_{\lambda}$ essentially does not come into play. Alternatively, as demonstrated in \cite{CM,LJ2}, the quasi-linear problem \eqref{1.1} can be transformed into a semi-linear one through variable substitution, and standard variational methods can be applied to solve it. Furthermore, in \cite{LJ3}, the authors proposed a method for more general quasi-linear equations, reducing the solution of \eqref{1.1} to establishing that $I_{\lambda}$ has a global minimizer on a Nehari manifold. Starting from \cite{LX}, the authors developed a perturbation method in a series of papers \cite{LJ1,LJ4,LX,LX1} to study the existence and multiplicity of solutions for a general class of quasi-linear elliptic equations including the aforementioned model.

On the other hand, ones would like to  search for solutions to \eqref{1.1} having a prescribed mass:
\begin{equation}\label{1.2}
\int_{\R^N}|u|^2dx=a.
\end{equation}
In this instance, the objective is to find  a real number $\lambda \in \mathbb{R}$ and a function $u \in H^1(\mathbb{R}^N)$ solving the equations \eqref{1.1} and \eqref{1.2}. Notably, $\lambda \in \mathbb{R}$ serves as a Lagrange multiplier. Importantly, the non-differentiability of $I_{\lambda}$ basically makes no difference in this analysis. From a physical perspective, this approach appears to be of particular significance, often providing profound insights into the dynamic attributes of the static solutions derived from equation \eqref{1.100}. For  this case, the solutions to the equations \eqref{1.1} and \eqref{1.2} correspond to the critical points of the following energy functional
\begin{equation}\label{1.3}
I(u)=\frac{1}{2}|\bi u|_2^2+\int_{\R^N}u^2|\bi u|^2dx-\frac{1}{p}|u|_p^p-\frac{\tau}{q}|u|_q^q
\end{equation}
restricted on the set
\begin{equation}\label{1.4}
\mathcal{S}_a:=\{u\in X\mid |u|_2^2=a\}.
\end{equation}

When $\tau=0$, the equation \eqref{1.1} is changed into the following equation
\begin{equation}\label{11.1}
-\Delta u-\Delta(|u|^2)u+\lambda u=|u|^{p-2}u, \text{   in }\R^N,
\end{equation}
whose constraint energy functional can be defined as
\begin{equation}\label{11.3}
F(u)=\frac{1}{2}|\bi u|_2^2+\int_{\R^N}u^2|\bi u|^2dx-\frac{1}{p}|u|_p^p
\end{equation}
Based on  the well-known Gagliardo-Nirenberg type inequality (refer to \eqref{2.4}), the functional $F$ is bounded from below on $\mathcal{S}_a$ for any $a>0$ if $ p<4+\frac{4}{N}$ and is unbounded from below on $\mathcal{S}_a$ for any $a>0$ if $p>4+\frac{4}{N}$. Therefore,  $4+\frac{4}{N}$ is the mass critical exponent of quasi-linear equation \eqref{11.1}.
For the $L^2$-subcritical case $2<p<4+\frac{4}{N}$,  Colin et al. \cite{CM1} used the method of constraint minimization to  prove the existence and properties of the normalized solutions. In \cite{ZL1}, Zhang, Li and Wang demonstrated that the equation \eqref{11.1} has infinitely many pairs of normalized solutions for $2<p<2+\frac{4}{N}$ by employing a dual approach. Applying a perturbation method, Jeanjean, Luo and Wang proved the existence and multiplicity of normalized solutions for suitable range of mass and $2+\frac{4}{N}<p<4+\frac{4}{N}$ in \cite{JT}. Concerning the $L^2$-critical case where $p = 4 + \frac{4}{N}$, Ye et al. \cite{YH} showed that there is no minimizer of $F|_{\mathcal{S}_a}$ for all $a > 0$. However, in \cite{LH} and \cite{YH}, they proved the existence and asymptotic behavior of normalized solutions to the equations \eqref{1.2} and \eqref{11.1}  for sufficiently large $a > 0$. For the $L^2$-supercritical case where $4+\frac{4}{N}<p<\left\{
                                                \begin{array}{ll}
                                                 \infty,N=1,2,  \\
                                                   2^*,N=3,
                                                   \end{array}
                                                   \right.
$, Li and Zou \cite{LH} applied a perturbation method to demonstrate the existence of positive normalized solution to the equations \eqref{1.2} and \eqref{11.1}  for $1\leq N\leq3$.
Besides this,  Lu and Mao \cite{MA} considered
 the equation \eqref{1.1} with  $2<q<2+\frac{4}{N}<4+\frac{4}{N}<p\left\{
                                                \begin{array}{ll}
                                                 <\infty,N=1,2,  \\
                                                  \leq 2^*,N=3,
                                                   \end{array}
                                                   \right.
$
and proved  the existence of normalized ground state solution and mountain-pass solution by  a perturbation method. Except this,  we have not found any other results  regarding the normalized solutions for the  quasi-linear Schr\"{o}dinger equations with combined nonlinearities.

We want to stress that the constraint energy functional $I$ is well-defined in $X$ when $2<p,q<2\cdot2^*$, but the results mentioned above are only for the case of $p,q<2^*$, and the authors of \cite{MA} has only showed the existence and multiplicity of normalized solutions to \eqref{1.1} for the mass-mixed case with $p,q\leq 2^*$.
Recently, Jeanjean, Zhang and Zhong studied  the existence and asymptotic behavior of positive  normalized ground state solution to the equations \eqref{1.2} and \eqref{11.1} with $N\geq1$ and  $4+\frac{4}{N}<p<\left\{
                                                \begin{array}{ll}
                                                 \infty,N=1,2,  \\
                                                   2\cdot2^*,N\geq3,
                                                   \end{array}
                                                   \right.
$ and  demonstrated the existence of positive  normalized solutions to the equations \eqref{1.2} and \eqref{11.1} for all mass $a>0$ when $1\leq N\leq4$ in \cite{JL}. Additionally, for dimensions $N\geq5$, they established that there exists  a precise threshold $a_0$ such that a normalized ground state solution exists if and only if $a\in(0,a_0]$. Therefore, we find that there  remains at least a research gap in the mass-mixed case for  $2^*<p< 2\cdot 2^*$. Motivated by \cite{JL,MA,SN}, we want to  consider whether there are normalized solutions to \eqref{1.1} for  $2<q<2+\frac{4}{N}<4+\frac{4}{N}<p<2\cdot2^*$.


Inspired by \cite{GFG,ZL}, we introduce the following Pohozaev manifold
\begin{equation}\label{1.6}
\mathcal{P}:=\{u\in X\setminus\{0\}\mid P(u)=0\},
\end{equation}
where
\begin{equation}\label{1.7}
P(u):=|\bi u|_2^2+(N+2)V(u)-\frac{N(p-2)}{2p}|u|_p^p-\frac{\tau N(q-2)}{2q}|u|_q^q.
\end{equation}
Rather than directly studying the constraint $\mathcal{P}\cap \mathcal{S}_a$ as in \cite{ZL}, and motivated by the work \cite{JL}, we will study a relaxed constraint. For $a>0$, we introduce the sets
\begin{equation}\label{1.10}
 D_a:=\{u\in X\mid |u|_2^2\leq a\}~~\hbox{and}~~\mathcal{P}_a:=\mathcal{P}\cap D_a.
\end{equation}
Then for $u\in D_a \setminus\{0\}$ and $t\in\R^+$, we set
$$t\star u(x):=t^{\frac{N}{2}}u(tx)$$
and the fiber map $t\mapsto \Psi_u(t):=I(t\star u)$, where
\begin{equation}\label{1.8}
I(t\star u)=\frac{1}{2}t^2|\bi u|_2^2+t^{N+2}V(u)-\frac{1}{p}t^{\frac{N(p-2)}{2}}|u|_p^p-\frac{\tau}{q}t^{\frac{N(q-2)}{2}}|u|_q^q.
\end{equation}
Therefore, we have
\begin{equation}\label{1.9}
\begin{aligned}
\Psi'_u(t)&=t|\bi u|_2^2+(N+2)t^{N+1}V(u)-\frac{N(p-2)}{2p}t^{\frac{N(p-2)}{2}-1}|u|_p^p-\frac{\tau N(q-2)}{2q}t^{\frac{N(q-2)}{2}-1}|u|_q^q\\
&=\frac{1}{t}P(t\star u)
\end{aligned}
\end{equation}
and
\begin{equation*}
\begin{aligned}
\Psi''_u(t)=&|\bi u|_2^2+(N+2)(N+1)t^N V(u)-\frac{N(p-2)[N(p-2)-2]}{4p}t^{\frac{N(p-2)}{2}-2}|u|_p^p\\
&-\frac{N(q-2)[N(q-2)-2]}{4q}t^{\frac{N(p-2)}{2}-2}|u|_q^q.
\end{aligned}
\end{equation*}

It is easy to see that
$\mathcal{P}_a$ can be divided into the disjoint union $\mathcal{P}_a=\mathcal{P}_a^+\cup\mathcal{P}_a^0\cup\mathcal{P}_a^-$, where
$$\mathcal{P}_a^+:=\{u\in D_a\setminus\{0\}\mid \Psi'_u(1)=0, \Psi''_u(1)>0\},$$
$$\mathcal{P}_a^0:=\{u\in D_a\setminus\{0\}\mid \Psi'_u(1)=0, \Psi''_u(1)=0\}$$
and
$$\mathcal{P}_a^-:=\{u\in D_a\setminus\{0\}\mid \Psi'_u(1)=0, \Psi''_u(1)<0\}.$$
We will show that  $\mathcal{P}_a^0=\emptyset, \mathcal{P}_a^+\neq \emptyset$ and $\mathcal{P}_a^-\neq \emptyset$ later (see  Lemmas \ref{lemma3.2} and \ref{lemma3.3}). Since $I$ is bounded from below on $\mathcal{P}_a$,   we can define
\begin{equation}\label{1.11}
M_a^+:=\inf_{u\in \mathcal{P}_a^+}I(u),~ M_a^-:=\inf_{u\in \mathcal{P}_a^-}I(u).
\end{equation}

In  Lemmas \ref{lemma3.800} and   \ref{lemma3.8}, we will prove that if  there exist $u^+\in \mathcal{P}_a^+$ such that $I(u^+)=M_a^+$ ($u^-\in \mathcal{P}_a^-$ such that $I(u^-)=M_a^-$), then we can deduce that $u^+ (u^-)$ is a critical point of $I|_{D_a}$.

Thus, we firstly need to find the minimizers $u^\pm\in X\setminus\{0\}$ of $I$ constrained to $D_a$ at the level $M_a^\pm:=\inf\limits_{u\in\mathcal{P}_a^\pm}I(u)$. Next, we only need to prove $u^\pm\in \mathcal{S}_a$ to get the existence of the normalized ground state solution  and the normalized solution of
mountain pass type for the initial problem.

Our results can be stated as follows:
\begin{theorem}\label{Th1}
Suppose that $1\leq N \leq 4$, $2<q<2+\frac{4}{N}<4+\frac{4}{N}<p<2\cdot2^*$, and let $a$ and $\tau$ be positive and satisfy the following inequality:
\begin{equation}\label{1.12}
\begin{aligned}
&\left(\tau a^{\frac{4N-q(N-2)}{2(N+2)}}\right)^{p\gamma_p-(N+2)}\left( a^{\frac{4N-p(N-2)}{2(N+2)}}\right)^{N+2-q\gamma_q}\\
&\quad<\left(\frac{p(N+2-q\gamma_q)}{C_{N,p}(p\gamma_p-q\gamma_q)}\right)^{N+2-q\gamma_q}\left(\frac{q(p\gamma_p-N-2)}{C_{N,q}(p\gamma_p-q\gamma_q)}\right)^{p\gamma_p-(N+2)},
\end{aligned}
\end{equation}
where $\gamma_p, \gamma_q, C_{N,p}, C_{N,q}$ are defined by \eqref{2.5}. Then there exist $u^+\in \mathcal{P}_a^+$ such that $I(u^+)=M_a^+$ and $u^-\in \mathcal{P}_a^-$ such that $I(u^-)=M_a^-$.

Moreover, we have the following facts:

(1)~~The functions $u^{\pm}$ are radially symmetric;

(2)~~There exist $\lambda^{\pm}>0$ such that $(\lambda^+, u^+)$ and $(\lambda^-, u^-)$ are solutions to the equations \eqref{1.1} and \eqref{1.4};

(3)~~$u^+$ constitutes a normalized ground state solution to the equations \eqref{1.1} and \eqref{1.4};

(4)~~$u^-$ arises as a normalized solution of mountain pass type of \eqref{1.1}.

\end{theorem}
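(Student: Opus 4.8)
The plan is to produce the minimizers $u^{\pm}$ by a direct method on the relaxed constraint $D_a$ and then to promote them to normalized solutions using the fiber-map structure recorded in \eqref{1.9}. First I would check that the levels $M_a^{\pm}$ in \eqref{1.11} are finite: the sign pattern $q\gamma_q<2<N+2<p\gamma_p$ forced by $2<q<2+\frac4N$ and $4+\frac4N<p<2\cdot2^*$, combined with the Gagliardo--Nirenberg inequality \eqref{2.4}, makes each fiber map $\Psi_u$ start at $0$, dip to a local minimum (the $\mathcal{P}_a^+$ point) and rise to a local maximum (the $\mathcal{P}_a^-$ point) before tending to $-\infty$, while \eqref{1.12} keeps these two critical points separated; Lemmas \ref{lemma3.2} and \ref{lemma3.3} make this decomposition rigorous. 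Given a minimizing sequence $\{u_n\}\subset\mathcal{P}_a^{\pm}$, I would pass to its Schwarz symmetric rearrangement $\{u_n^{*}\}$. Writing $V(u)=\int_{\R^N}u^2|\bi u|^2\,dx=\tfrac14\int_{\R^N}|\bi(u^2)|^2\,dx$, the P\'olya--Szeg\H{o} inequality gives $V(u^{*})\le V(u)$ and $|\bi u^{*}|_2^2\le|\bi u|_2^2$, while the $L^p$, $L^q$ norms are preserved; since these quantities enter $\Psi_u$ with the signs $+,+,-,-$, one has $\Psi_{u^{*}}(t)\le\Psi_{u_n}(t)$ for all $t$, so after reprojecting onto $\mathcal{P}_a^{\pm}$ through the mass-preserving scaling $t\star u$ we may assume the minimizing sequence is radially decreasing without raising the level. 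Boundedness in $X$ then follows from $P(u_n)=0$ together with the energy bound, because the combination $I-\frac{1}{p\gamma_p}P$ eliminates the $|u|_p^p$ term and is, after absorbing the subcritical $|u|_q^q$ term by \eqref{2.4}, coercive in both $|\bi u_n|_2^2$ and $V(u_n)$.

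The main obstacle is the compactness of the quasilinear term. Restricted to the radial subspace, the compact embeddings $H^1_{\mathrm{rad}}(\R^N)\hookrightarrow L^s(\R^N)$ for $2<s<2^*$ deliver strong convergence of $u_n$ in $L^p$ and $L^q$, whereas only weak lower semicontinuity is available for $|\bi u|_2^2$ and $V(u)$. The delicate points are to exclude vanishing, so that the weak limit $u$ is nonzero, and to recover $P(u)=0$ in the limit. I would handle these by testing against the fiber map $t\mapsto\Psi_u(t)$: if $u\neq0$ it has a unique critical point in the relevant component, and any strict loss in the lower-semicontinuity inequality for $|\bi u|_2^2$ or $V(u)$ would, after reprojecting onto $\mathcal{P}_a^{\pm}$, produce an admissible competitor with strictly smaller energy, contradicting the definition of $M_a^{\pm}$; this simultaneously forces the convergences to be strong and places $u$ in $\mathcal{P}_a^{\pm}$ with $I(u)=M_a^{\pm}$. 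The radial symmetry in item (1) is then immediate from the rearrangement step.

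For the remaining items, Lemmas \ref{lemma3.800} and \ref{lemma3.8} already identify $u^{\pm}$ as critical points of $I$ constrained to $D_a$. Since $D_a$ is cut out by the inequality $|u|_2^2\le a$, the Karush--Kuhn--Tucker conditions yield multipliers $\lambda^{\pm}\ge0$ for which \eqref{1.1} holds. To prove item (2) I would show the mass constraint is active, i.e. $u^{\pm}\in\mathcal{S}_a$: the interior alternative $|u^{\pm}|_2^2<a$ is ruled out using $P(u^{\pm})=0$ from \eqref{1.7} (an interior critical point would be a free critical point of $I$, which the Pohozaev identity excludes in this mass-mixed range), and then combining $P(u^{\pm})=0$ with the equation upgrades $\lambda^{\pm}\ge0$ to $\lambda^{\pm}>0$. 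Finally, item (3) follows by showing $M_a^+$ is the least energy among all solutions of \eqref{1.1} lying on $\mathcal{S}_a$, and item (4) from the minimax characterization $M_a^-=\inf_{u}\max_{t>0}\Psi_u(t)$ over the relevant class, which exhibits $u^-$ as a solution of mountain-pass type. I expect the strong convergence needed to pass to the limit in $V$, and the saturation of the mass constraint required for $\lambda^{\pm}>0$, to be the genuinely hard steps.
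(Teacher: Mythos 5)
Your existence-and-compactness argument (Schwarz rearrangement, radial compactness, ruling out vanishing via $M_a^+<0<M_a^-$, reprojection onto $\mathcal{P}_a^\pm$ to recover the constraint) is essentially the paper's Lemmas \ref{lemma3.6}--\ref{lemma3.7}, and your KKT step corresponds to Lemmas \ref{lemma3.800}, \ref{lemma3.8} and \ref{lemma3.9}. The genuine gap is in how you obtain $u^\pm\in\mathcal{S}_a$ and $\lambda^\pm>0$. You claim the interior alternative $|u^\pm|_2^2<a$ is impossible because a free critical point of $I$ ``is excluded by the Pohozaev identity in this mass-mixed range''. That is false for $N\geq3$: a solution of $-\Delta u-\Delta(|u|^2)u=|u|^{p-2}u+\tau|u|^{q-2}u$ satisfies the Nehari identity $|\bi u|_2^2+4V(u)=|u|_p^p+\tau|u|_q^q$ and the Pohozaev identity $\frac{N-2}{2}|\bi u|_2^2+(N-2)V(u)=\frac{N}{p}|u|_p^p+\frac{N\tau}{q}|u|_q^q$; with two distinct powers $p\neq q$ these are two linear relations among four nonnegative quantities and force no contradiction when $N\geq3$ (the classical single-power nonexistence argument does not survive mixed nonlinearities). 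Precisely for this reason the paper argues in the opposite order: it first proves $\lambda^\pm\neq0$ (Lemma \ref{lemma3.11}) by the dual change of variables $u^\pm=\varphi(v^\pm)$, observing that $v^\pm$ would then be a nonnegative, nontrivial, superharmonic $H^1$ function, which is impossible for $1\leq N\leq4$ by the Liouville-type result \cite[Lemma A.2]{IN}; only then does mass saturation $|u^\pm|_2^2=a$ follow, from Lemma \ref{lemma3.9} ($\lambda^\pm\neq0\Rightarrow|u^\pm|_2^2=a$). Note that your argument never uses the hypothesis $1\leq N\leq 4$ (nor the nonnegativity of the rearranged minimizers, which is what makes the superharmonicity argument work), while the paper's remark following Theorem \ref{Th1} explains that for $N\geq5$ the question is open --- a clear sign this step cannot work as written.

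Your second claim, that ``combining $P(u^\pm)=0$ with the equation upgrades $\lambda^\pm\geq0$ to $\lambda^\pm>0$'', also fails exactly in the range that is the point of this paper. Eliminating $|\bi u|_2^2$ between the Nehari identity and $P(u)=0$ gives
\[
\lambda|u|_2^2=(1-\gamma_p)|u|_p^p+\tau(1-\gamma_q)|u|_q^q+(N-2)V(u),
\]
and $\gamma_p>1$ whenever $p>2^*$ (allowed here, since $p$ ranges up to $2\cdot2^*$), so the right-hand side has no definite sign; for $N=1$ the term $(N-2)V(u)$ is negative as well. Hence neither the saturation of the mass constraint nor the strict positivity of the multiplier can be extracted from these identities alone; both rest on the Liouville-type argument that the paper isolates in Lemma \ref{lemma3.11}, applied to the nonnegative minimizers produced by rearrangement, and this is the one step of your outline that needs to be replaced rather than merely fleshed out.
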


\begin{remark}
Previous work established existence results for dimensions $1 \leq N \leq 3$: Li and Zou handled the case $4 + \frac{4}{N} < p < 2^*$ in  \cite{LH}, while Lu and Mao \cite{MA} addressed $2 < q < 2 + \frac{4}{N} < 4 + \frac{4}{N} < p \leq 2^*$, both using perturbation methods. Jeanjean, Zhang, and Zhong \cite{JL} later extended these results to higher dimensions $N = 4$ and $N \geq 5$ for the full range $4 + \frac{4}{N} < p < 2 \cdot 2^*$, removing the previous $N \leq 3$ limitation. They also showed that for $N \geq 5$, solutions exist only when the mass $a$ is below an explicit value $a_0 > 0$. This paper studies normalized solutions for the mixed-mass case in dimensions $1 \leq N \leq 4$ with nonlinear exponents $2 < q < 2 + \frac{4}{N} < 4 + \frac{4}{N} < p < 2 \cdot 2^*$. We prove that both equations \eqref{1.1} and \eqref{1.4} have normalized ground state solutions and mountain-pass type solutions. Our main challenge is to verify that $u^+$ is a weak solution, which we achieve by establishing the existence of the Lagrange multiplier $\lambda^+$ through constrained minimization combined with the implicit function theorem.
\end{remark}

\begin{remark}
The  results of Theorem \ref{Th1} need to be restricted under the  condition $1\leq N\leq 4$, since Lemma \ref{lemma3.11} holds only for the case of $1\leq N\leq 4$, i.e., we are unaware of the existence results for the nonnegative nontrivial solutions to the quasi-linear equation
\begin{equation}\label{6.30.1}
-\Delta u - \Delta(|u|^2)u = |u|^{p-2}u + \tau |u|^{q-2}u \quad \text{in} \quad \mathbb{R}^N, N\geq 5.
\end{equation}
If ones can show that \eqref{6.30.1} has no nonnegative nontrivial solutions or \eqref{6.30.1} has no nonnegative nontrivial solutions with a small $L^2-$ norm, which implies that our Lemma \ref{lemma3.11} is also right for $N\geq5$ or $N\geq 5$ and $a$ suitable small, then our results are also right for $N\geq5$ or $N\geq 5$ and $a$ suitable small.

\end{remark}

Before closing the  introduction, we want to introduce our research ideas. We study a combined nonlinear problem, whose difficulty is that we want to find two solutions: a local minimum solution and a mountain-pass type solution. To ensure the existence of these two solutions (especially the local minimum solution), we require the mass parameter
$a$  to satisfy specific conditions. After that, the mass-mixed case makes that the constraint energy functional has a concave-convex structure.
In the process  of finding normalized solution of mountain pass type,  our methods are basically the same as that in \cite{JL}.
When proving the existence of the  local minimum solution (see Lemma \ref{lemma3.800}), the methods we adopt (such as constructing paths, the implicit function theorem, etc.) are different from the standard methods used in the  previous studies on purely mass subcritical or mass supercritical cases to prove that the attaining function is a weak solution. The key point  is that through these methods, we successfully convert the properties of the minimizer into the existence of a weak solution.

The paper is organized as follows. In Section 2, we collect some preliminary results,  which are crucial for our proof.  We discuss the existence of the normalized  ground state solution and the normalized solution of mountain pass type in Section 3.

\section{Preliminaries}

In this Section, we provide some useful preliminaries. We first introduce some notations:
we write $|u|_p^p:=\int_{\R^N}|u|^pdx$ with $1\leq p<+\infty$ and let  $H_{rad}^1(\R^N)$ denote  the subspace, consisting  of radially symmetric functions of  $H^1(\R^N)$. Moreover, when the parameter $\tau$ is determined, we will  omit the subscript $\tau$ of $I_{\tau}, \mathcal{P}_{\tau}, \mathcal{P}_{a,\tau}$, and   write them as  $I, \mathcal{P}, \mathcal{P}_{a}$.

\begin{lemma}
A Gagliardo-Nirenberg type inequality. We recall the following Gagliardo-Nirenberg type inequality: There is some positive constant $C_{N,t}$ such that
\begin{equation}\label{2.4}
\int_{\R^N}|u|^tdx\leq C_{N,t}\left(\int_{\R^N}|u|^2dx\right)^{\frac{4N-t(N-2)}{2(N+2)}}\left(V(u)\right)^{\frac{N(t-2)}{2(N+2)}},~~ 2<t<2\cdot2^*.
\end{equation}
\end{lemma}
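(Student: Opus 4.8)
The plan is to reduce this quasilinear Gagliardo--Nirenberg inequality to the classical one by the substitution $v:=u^2=|u|^2$, which turns all three quantities appearing in \eqref{2.4} into ordinary Lebesgue norms of $v$. Indeed, since $\nabla(u^2)=2u\nabla u$, one has the algebraic identities
\[
V(u)=\int_{\R^N}u^2|\nabla u|^2\,dx=\frac14\int_{\R^N}|\nabla(u^2)|^2\,dx=\frac14|\nabla v|_2^2,\qquad |v|_1=|u|_2^2,\qquad |v|_{t/2}^{t/2}=|u|_t^t.
\]
Thus proving \eqref{2.4} is equivalent to controlling $|v|_s$ (with $s:=t/2$) by $|\nabla v|_2$ and $|v|_1$.

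Next I would invoke the classical Gagliardo--Nirenberg inequality for $v$ with gradient in $L^2$ and lower-order norm $|v|_1$, namely $|v|_s\le C|\nabla v|_2^{\theta}|v|_1^{1-\theta}$, where the scaling relation $\tfrac1s=\theta(\tfrac12-\tfrac1N)+(1-\theta)$ forces $\theta=\frac{2N(t-2)}{t(N+2)}$. Raising this to the power $s=t/2$ and substituting the three identities above, the bookkeeping of exponents gives exactly the power $\frac{N(t-2)}{2(N+2)}$ on $V(u)$ (from $\tfrac{\theta s}{2}$, after writing $|\nabla v|_2=(4V(u))^{1/2}$) and the power $\frac{4N-t(N-2)}{2(N+2)}$ on $|u|_2^2$ (from $(1-\theta)s$, using $1-\theta=\frac{4N-t(N-2)}{t(N+2)}$); the constant $C_{N,t}$ absorbs the classical constant together with the factor $4^{N(t-2)/(2(N+2))}$. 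This recovers \eqref{2.4}. I would then record the admissibility check: one needs $0<\theta<1$, and a direct computation shows $\theta>0\iff t>2$ while $\theta<1\iff t(N-2)<4N\iff t<2\cdot2^*$ for $N\ge3$ (the upper bound being vacuous when $N=1,2$, where $2^*=\infty$). Hence the stated range $2<t<2\cdot2^*$ is precisely the range in which the reduction is valid.

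The exponent arithmetic is routine, so the only genuine content is the reduction itself and the justification that the classical inequality may be applied to $v=u^2$. The main obstacle is therefore functional-analytic rather than computational: one must check that $v$ lies in the space in which the classical Gagliardo--Nirenberg inequality holds. For $u\in X\subset H^1(\R^N)$ we have $v\ge0$, $v\in L^1$ (since $|v|_1=|u|_2^2<\infty$), and $\nabla v=2u\nabla u\in L^2$ (since $|\nabla v|_2^2=4V(u)<\infty$); by the Sobolev embedding $v$ also belongs to $L^{2^*}$ for $N\ge3$, so $v\in L^s$ for all $1\le s\le 2^*$ by interpolation, which covers $s=t/2$. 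To make the application of the classical inequality fully rigorous I would approximate $u$ (and hence $v$) by functions in $\mathcal{C}_0^\infty(\R^N)$, apply the inequality to the smooth approximations, and pass to the limit using the convergence of $|\nabla v|_2$, $|v|_1$, and $|v|_s$. Once this approximation step is in place, the proof is complete.
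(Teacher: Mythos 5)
Your proof is correct, but note that the paper itself does not prove this lemma at all: it is stated with a proof that consists only of the citation \cite[Lemma 4.2]{CM1} (alternatively \cite[Lemma 4.14]{GF}). The argument you give --- substituting $v=u^2$ so that $V(u)=\tfrac14|\nabla v|_2^2$, $|v|_1=|u|_2^2$, $|v|_{t/2}^{t/2}=|u|_t^t$, and then applying the classical Gagliardo--Nirenberg inequality $|v|_s\le C|\nabla v|_2^{\theta}|v|_1^{1-\theta}$ with $s=t/2$ --- is precisely the standard proof underlying those citations, so in substance you have reproduced the intended argument rather than found a new one. Your exponent bookkeeping checks out: $\theta=\frac{2N(t-2)}{t(N+2)}$, hence $\theta s/2=\frac{N(t-2)}{2(N+2)}$ and $(1-\theta)s=\frac{4N-t(N-2)}{2(N+2)}$, and the admissibility condition $0<\theta<1$ is exactly $2<t<2\cdot2^*$ (vacuous upper bound for $N=1,2$). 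The one step you state loosely is the final approximation: approximating $u$ itself by $\mathcal{C}_0^\infty$ functions in a way that makes $|u_n|_2$, $V(u_n)$ and $|u_n|_t$ all converge is delicate (density of smooth functions in $X$ is a nontrivial issue for quasilinear problems). It is cleaner, and standard, to regularize $v=u^2$ directly: $v$ is a nonnegative function with $v\in L^1(\R^N)$ and $\nabla v\in L^2(\R^N)$, and for such $v$ the classical inequality extends by truncation ($v_k=\min(v,k)$), cutoff, and mollification, with passage to the limit by monotone/dominated convergence. With that adjustment your argument is complete and self-contained.
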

\begin{proof}
The proof of \eqref{2.4} can be found in \cite[Lemma 4.2]{CM1} or \cite[Lemma 4.14]{GF}.
\end{proof}

For the convenience of calculation, we denote
\begin{equation}\label{2.5}
\gamma_t:=\frac{N(t-2)}{2t}, \text{ where }t\in (2,2\cdot2^*).
\end{equation}

\section{The proof of Theorem \ref{Th1}}

In this Section, we prove Theorem \ref{Th1}. The proof is divided into three subsections.

\subsection{Properties of $\mathcal{P}_a$}
 For any $u\in D_a\setminus\{0\}$, using the Gagliardo-Nirenberg inequality \eqref{2.4}, we deduce that
\begin{equation}\label{3.1}
\begin{aligned}
I(u)&=\frac{1}{2}|\bi u|_2^2+V(u)-\frac{1}{p}|u|_p^p-\frac{\tau}{q}|u|_q^q\\
&\geq V(u)-\frac{1}{p}|u|_p^p-\frac{\tau}{q}|u|_q^q\\
&\geq V(u)-\frac{C_{N,p}}{p}a^{\frac{4N-p(N-2)}{2(N+2)}}\left(V(u)\right)^{\frac{p\gamma_p}{N+2}}
-\frac{\tau C_{N,q}}{q}a^{\frac{4N-q(N-2)}{2(N+2)}}\left(V(u)\right)^{\frac{q\gamma_q}{N+2}}.
\end{aligned}
\end{equation}
Therefore, it is natural to study the function $g:[0, +\infty) \rightarrow\R$,
$$g(s):=s-\frac{C_{N,p}}{p}a^{\frac{4N-p(N-2)}{2(N+2)}}s^{\frac{p\gamma_p}{N+2}}
-\frac{\tau C_{N,q}}{q}a^{\frac{4N-q(N-2)}{2(N+2)}}s^{\frac{q\gamma_q}{N+2}} $$
to understand the geometry structure of the functional $I|_{D_a\setminus\{0\}}$.
Because of $\tau>0$ and $\frac{q\gamma_q}{N+2}<1<\frac{p\gamma_p}{N+2}$, we deduce that
$$ g(0^+)=0^- \text{ and } g(+\infty)=-\infty. $$

\begin{lemma}\label{lemma3.1}
If $a$ and $\tau$ are positive and satisfy the inequality \eqref{1.12}, then the function $g(s)$ has exactly two critical points, one is local minimum point  at negative level and the other is global maximum  point at positive level. Meanwhile, there exist $0<R_0<R_1$, both depending on $a$ and $\tau$, such that $g(R_0)=0=g(R_1)$ and $g(s)>0$ if and only if $s\in(R_0,R_1)$.
\end{lemma}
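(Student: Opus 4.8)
The plan is to analyze the function $g(s) = s - A s^{\alpha} - B s^{\beta}$, where I set $A := \frac{C_{N,p}}{p} a^{\frac{4N-p(N-2)}{2(N+2)}} > 0$, $B := \frac{\tau C_{N,q}}{q} a^{\frac{4N-q(N-2)}{2(N+2)}} > 0$, and the exponents $\alpha := \frac{p\gamma_p}{N+2}$ and $\beta := \frac{q\gamma_q}{N+2}$. From the mass assumptions $2 < q < 2+\frac{4}{N}$ and $4+\frac{4}{N} < p < 2\cdot 2^*$ one checks directly via \eqref{2.5} that $0 < \beta < 1 < \alpha$ (indeed $p > 4+\frac{4}{N}$ forces $p\gamma_p > N+2$, and $q < 2+\frac{4}{N}$ forces $q\gamma_q < N+2$). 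This is exactly the concave--convex structure of the problem and is the starting point for everything.

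First I would locate the critical points by studying $g'(s) = 1 - A\alpha s^{\alpha-1} - B\beta s^{\beta-1}$. Since $g(0^+) = 0^-$ and $g(+\infty) = -\infty$, and $g$ is continuous and (for small $s$) dominated by the $-Bs^\beta$ term (because $\beta < 1$ makes $s^\beta$ decay to $0$ slower than $s$, so $g$ starts out negative), the shape I expect is: $g$ decreases from $0$, reaches a local minimum at some $s_1$ where $g(s_1) < 0$, then increases to a local maximum at some $s_2 > s_1$, then decreases to $-\infty$. To pin this down rigorously I would show $g'$ has exactly two zeros. Dividing $g'(s) = 0$ by an appropriate power, or equivalently studying $h(s) := A\alpha s^{\alpha-1} + B\beta s^{\beta-1}$ and asking when $h(s) = 1$, I note that $h$ is strictly convex on $(0,\infty)$ (it is a positive combination of $s^{\alpha-1}$ with $\alpha - 1 > 0$ convex and $s^{\beta-1}$ with $\beta - 1 < 0$ also convex), with $h(0^+) = +\infty$ and $h(+\infty) = +\infty$, so $h$ has a unique interior minimum. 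Hence the equation $h(s) = 1$ has exactly two solutions precisely when the minimum value $h_{\min}$ satisfies $h_{\min} < 1$; these two solutions are the two critical points of $g$, the first a local minimum and the second a local maximum of $g$.

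The main obstacle — and the reason the hypothesis \eqref{1.12} is imposed — is quantifying the condition $h_{\min} < 1$, equivalently showing that the global maximum value of $g$ is strictly positive. I would compute the maximizer $\bar s$ of $g$ (or directly estimate $\max_s g$) and translate $g(\bar s) > 0$ into a constraint on $A$ and $B$. The cleanest route is: the requirement that the curve $y = As^\alpha + Bs^\beta$ lie strictly below the line $y = s$ for some $s$ is equivalent to a smallness condition on the pair $(A,B)$; carrying out the optimization (minimizing over $s$ the ratio that forces $As^\alpha + Bs^\beta \le s$) and substituting the explicit exponents $\alpha,\beta$ yields exactly inequality \eqref{1.12}. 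Concretely, writing $N+2 - q\gamma_q$ and $p\gamma_p - (N+2)$ as the natural weights, the two-term convexity optimization produces the product form on the right-hand side of \eqref{1.12} with the constants $\frac{p(N+2-q\gamma_q)}{C_{N,p}(p\gamma_p - q\gamma_q)}$ and $\frac{q(p\gamma_p - N - 2)}{C_{N,q}(p\gamma_p - q\gamma_q)}$ appearing from the Lagrange/critical-point conditions. I expect this explicit optimization to be the bookkeeping-heavy step, but conceptually routine once the convexity of $h$ is in hand.

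Finally, once $g_{\max} > 0$ is established, the existence and ordering of the two positive zeros $R_0 < R_1$ follows by the intermediate value theorem: $g < 0$ near $0$, $g(s_2) = g_{\max} > 0$ at the local maximum, and $g \to -\infty$, so $g$ crosses zero exactly once on $(0, s_2)$ (call it $R_0$, lying to the right of the local minimum $s_1$) and exactly once on $(s_2, \infty)$ (call it $R_1$); strict monotonicity of $g$ on each relevant interval gives uniqueness of these crossings and the claim $g(s) > 0 \iff s \in (R_0, R_1)$. The statements that the local minimum sits at a negative level and the global maximum at a positive level are then immediate from $g(s_1) \le g(0^+) = 0^-$ (with strictness since $s_1$ is a strict interior minimum below the $s$-axis) and $g(s_2) = g_{\max} > 0$ respectively.
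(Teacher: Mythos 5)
Your overall plan is the same as the paper's: reduce everything to showing that under \eqref{1.12} the global maximum of $g$ is positive (the explicit optimization you describe does reproduce \eqref{1.12} exactly, just as in the paper's computation of $f(\bar s)$), and separately show that $g'$ has at most two zeros so that the critical-point structure is exactly local-min-then-global-max. However, there is one concrete error in your argument for the second step: you claim $h(s)=A\alpha s^{\alpha-1}+B\beta s^{\beta-1}$ is strictly convex because $s^{\alpha-1}$ with $\alpha-1>0$ is convex. That is false in general: $s^{\gamma}$ is convex only for $\gamma\geq 1$ (or $\gamma\leq 0$), so $s^{\alpha-1}$ is convex only when $\alpha\geq 2$, i.e.\ when $p\geq 6+\frac{8}{N}$. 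In the present setting $\alpha=\frac{p\gamma_p}{N+2}=\frac{N(p-2)}{2(N+2)}$ can lie anywhere in $(1,2)$; for instance when $N=4$ one has $\alpha=\frac{p-2}{3}<2$ for \emph{every} admissible $p<2\cdot 2^*=8$, so in dimension four your convexity claim fails throughout the whole range covered by the theorem. A sum of a strictly concave and a strictly convex function need not be convex: e.g.\ $h(s)=s^{1/2}+s^{-1/2}$ satisfies $h''(9)<0$. Also, a minor imprecision: $h_{\min}<1$ is \emph{not} equivalent to $\max_s g>0$ (one can have two critical points with the local maximum of $g$ still at a negative level); this does not damage your proof only because what you actually derive from \eqref{1.12} is the stronger statement $g(\bar s)>0$.

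The good news is that the conclusion you want from convexity (a unique interior minimum of $h$, hence at most two solutions of $h(s)=1$) is true and has a one-line direct proof, which is the natural repair:
\begin{equation*}
h'(s)=A\alpha(\alpha-1)s^{\alpha-2}-B\beta(1-\beta)s^{\beta-2}
=s^{\beta-2}\Bigl[A\alpha(\alpha-1)\,s^{\alpha-\beta}-B\beta(1-\beta)\Bigr],
\end{equation*}
and since $\alpha-\beta>0$ the bracket is strictly increasing and changes sign exactly once, from negative to positive; thus $h$ is strictly decreasing then strictly increasing, and $h=1$ has at most two roots. This is essentially what the paper does in a slightly different normalization: it multiplies $g'(s)=0$ through by a power of $s$ and shows the resulting function $\varphi(s)=s^{1-\frac{q\gamma_q}{N+2}}-\frac{\gamma_p C_{N,p}}{N+2}a^{\frac{4N-p(N-2)}{2(N+2)}}s^{\frac{p\gamma_p-q\gamma_q}{N+2}}$ is strictly increasing on $(0,s_3)$ and strictly decreasing on $(s_3,\infty)$, so that $\varphi(s)=\mathrm{const}$ has at most two solutions. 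With the convexity claim replaced by the sign analysis of $h'$ (or by the paper's monotonicity argument for $\varphi$), the rest of your proof --- the derivation of \eqref{1.12} from $g(\bar s)>0$, the location of the two critical points, and the intermediate value argument producing $R_0<R_1$ with $g>0$ precisely on $(R_0,R_1)$ --- is correct and matches the paper.
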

\begin{proof}
We have $g(s)>0$ if and only if
$$
f(s)>\frac{\tau C_{N,q}}{q}a^{\frac{4N-q(N-2)}{2(N+2)}}~,\text{where } f(s):=s^{1-\frac{q\gamma_q}{N+2}}-\frac{ C_{N,p}}{p}a^{\frac{4N-p(N-2)}{2(N+2)}}s^{\frac{p\gamma_p-q\gamma_q}{N+2}}.
$$
By direct calculations, we can see that $f(x)$ attains its maximum
$$
f(\bar{s})=\left(\frac{p(N+2-q\gamma_q)}{C_{N,p}(p\gamma_p-q\gamma_q)}\right)^{\frac{N+2-q\gamma_q}{p\gamma_p-(N+2)}}
\frac{p\gamma_p-(N+2)}{p\gamma_p-q\gamma_q}a^{-\frac{4N-p(N-2)}{2(N+2)}\frac{N+2-q\gamma_q}{p\gamma_p-(N+2)}},
$$
where
$$
\bar{s}:=\left(\frac{p(N+2-q\gamma_q)}{C_{N,p}(p\gamma_p-q\gamma_q)}\right)^{\frac{N+2}{p\gamma_p-(N+2)}}a^{\frac{p(N-2)-4N}{Np-4N-4}}.
$$
%
If $a$ and $\tau$ are positive and satisfy the inequality \eqref{1.12}, then can have that $g(\bar{s})>0$, which, combining $g(0)=0,~g(0^+)=0^-$ and $\lim\limits_{s\to +\infty}g(s)=-\infty$, implies that $g(s)$ has at least a local minimum point $s_0$  and at least a global maximum point $s_1$, and $g(s_0)<0$ and $g(s_1)>0$. It is easy to see that $g^\prime(s_0)=g^\prime(s_1)=0$. So $g(s)$ has at least  two critical points $s_0$ and $s_1$

On the other hand, we can see that  $g'(s)=0$ is equivalent to
\begin{equation}\label{5.281}
s^{1-\frac{q\gamma_q}{N+2}}-\frac{ \gamma_pC_{N,p}}{N+2}a^{\frac{4N-p(N-2)}{2(N+2)}}s^{\frac{p\gamma_p-q\gamma_q}{N+2}}=\frac{\tau \gamma_qC_{N,q}}{N+2}a^{\frac{4N-q(N-2)}{2(N+2)}}.
\end{equation}
Letting $\varphi(s):=s^{1-\frac{q\gamma_q}{N+2}}-\frac{ \gamma_pC_{N,p}}{N+2}a^{\frac{4N-p(N-2)}{2(N+2)}}s^{\frac{p\gamma_p-q\gamma_q}{N+2}}$, direct computation tells us that there exists a $s_3$ such that $\varphi(s)$ increases strictly in $(0, s_3)$ and decreases strictly in $(s_3,+\infty)$, which implies \eqref{5.281} has at most two solutions. Therefore, $g^\prime(s)=0$ has at most two solutions, which, together with $g^\prime(s_0)=g^\prime(s_1)=0$, tells us that  $g(s)$ has only two critical points $s_0$ and $s_1$.
Using the facts that $g(0)=0, g(0^+)=0^-$ and $\lim\limits_{s\to +\infty}g(s)=-\infty$, $g(s)$ has only two critical points $s_0$ and $s_1$ and $g(s_1)>0$, we can deduce that there exists $0<R_0<R_1$ such that $g(R_0)=g(R_1)=0$ and $g(s)>0$ for $s\in (R_0, R_1)$.
\end{proof}

Recalling the decomposition of $\mathcal{P}_a=\mathcal{P}_a^+\cup\mathcal{P}_a^0\cup\mathcal{P}_a^-$, we first prove  the following Lemma.
\begin{lemma}\label{lemma3.2}
If $a$ and $\tau$ are positive and satisfy the inequality \eqref{1.12}, then $\mathcal{P}_a^0=\emptyset$.
\end{lemma}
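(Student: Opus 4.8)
The plan is to argue by contradiction: assume some $u\in\mathcal{P}_a^0$ exists and derive a contradiction with Lemma \ref{lemma3.1}. Since $u\in D_a\setminus\{0\}$, I first record that $V(u)>0$ (otherwise $u\bi u=0$ a.e., forcing $u$ constant, hence $u\equiv 0$ in $H^1(\R^N)$) and $|u|_p^p,|u|_q^q>0$. The two defining conditions $\Psi'_u(1)=0$ and $\Psi''_u(1)=0$ I would recast through the fiber map. Using \eqref{1.9}, factor $\Psi'_u(t)=t^{q\gamma_q-1}\phi_u(t)$, where $\phi_u(t):=t^{2-q\gamma_q}|\bi u|_2^2+(N+2)t^{N+2-q\gamma_q}V(u)-\gamma_p t^{p\gamma_p-q\gamma_q}|u|_p^p-\tau\gamma_q|u|_q^q$. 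Then $\Psi'_u(1)=\phi_u(1)$ and, on the set $\{\Psi'_u(1)=0\}$, a direct computation gives $\Psi''_u(1)=\phi'_u(1)$ (the difference is a multiple of $\Psi'_u(1)$). Hence $u\in\mathcal{P}_a^0$ is equivalent to $\phi_u(1)=\phi'_u(1)=0$.

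The structural step is to show these two equalities force $\Psi_u$ to be strictly decreasing on $(0,\infty)$. Here I use the exponent ordering $q\gamma_q<2<N+2<p\gamma_p$, which follows from $2<q<2+\frac{4}{N}$ and $4+\frac{4}{N}<p$ via \eqref{2.5}. Differentiating, $\phi'_u(t)=t^{1-q\gamma_q}\psi_u(t)$ with $\psi_u(t):=(2-q\gamma_q)|\bi u|_2^2+(N+2)(N+2-q\gamma_q)t^N V(u)-\gamma_p(p\gamma_p-q\gamma_q)t^{p\gamma_p-2}|u|_p^p$. All three coefficients are positive and $p\gamma_p-2>N$, so a further differentiation shows $\psi_u$ increases then decreases; being nonnegative at $0^+$ and tending to $-\infty$, it changes sign exactly once. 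Consequently $\phi_u$ is strictly increasing then strictly decreasing and has a unique critical point equal to its global maximum. The condition $\phi'_u(1)=0$ identifies $t=1$ as that maximizer, while $\phi_u(1)=0$ says the maximum value is $0$. Thus $\phi_u\le 0$ on $(0,\infty)$, so $\Psi'_u(t)=t^{q\gamma_q-1}\phi_u(t)\le 0$ with equality only at $t=1$, i.e. $\Psi_u$ is strictly decreasing. Since every exponent in \eqref{1.8} is positive, $\lim_{t\to 0^+}\Psi_u(t)=0$, whence $\sup_{t>0}I(t\star u)=\sup_{t>0}\Psi_u(t)=0$.

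On the other hand, exactly as in \eqref{3.1} but applied to $t\star u$ (using $|t\star u|_2^2=|u|_2^2\le a$ and $V(t\star u)=t^{N+2}V(u)$), the Gagliardo--Nirenberg inequality \eqref{2.4} yields $I(t\star u)\ge g(t^{N+2}V(u))$ for every $t>0$. As $V(u)>0$, the map $t\mapsto t^{N+2}V(u)$ is a bijection of $(0,\infty)$ onto itself, so $\sup_{t>0}I(t\star u)\ge\sup_{s>0}g(s)$. By Lemma \ref{lemma3.1}, under hypothesis \eqref{1.12} the function $g$ attains a strictly positive global maximum, hence $\sup_{s>0}g(s)>0$ and therefore $\sup_{t>0}I(t\star u)>0$. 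This contradicts the equality $\sup_{t>0}I(t\star u)=0$ obtained above, and proves $\mathcal{P}_a^0=\emptyset$.

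I expect the main obstacle to be the second paragraph: converting the purely local degeneracy $\phi_u(1)=\phi'_u(1)=0$ into the global monotonicity of the fiber map. This is precisely where the sign analysis of $\psi_u$ and the ordering $q\gamma_q<2<N+2<p\gamma_p$ are indispensable, which is why the mass-subcritical/supercritical split of the exponents is needed. Note that condition \eqref{1.12} enters only through Lemma \ref{lemma3.1}, via the strict positivity of $\max_{s>0}g$.
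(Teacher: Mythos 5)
Your proof is correct, but it takes a genuinely different route from the paper's. The paper argues algebraically: writing $P(u)=0$ and $\Psi''_u(1)=0$ as \eqref{3.2}--\eqref{3.3}, it forms the linear combination \eqref{3.4} that eliminates $|u|_p^p$ (and a symmetric one eliminating $|u|_q^q$), drops the gradient terms by sign, applies the Gagliardo--Nirenberg inequality \eqref{2.4} twice to bound $V(u)$ from above and from below, and multiplies the two bounds to reach \eqref{3.6}, which is then shown to contradict \eqref{1.12} by a constant comparison ``as in Lemma 5.2 of \cite{SN}''. You instead argue geometrically: the degeneracy $\Psi'_u(1)=\Psi''_u(1)=0$, via the factorization $\Psi'_u(t)=t^{q\gamma_q-1}\phi_u(t)$ and the unimodality of $\phi_u$, forces $\Psi_u$ to be strictly decreasing on $(0,\infty)$, hence $\sup_{t>0}I(t\star u)=0$; since $I(t\star u)\geq g(t^{N+2}V(u))$ and Lemma \ref{lemma3.1} guarantees $\max_{s>0}g(s)>0$ under \eqref{1.12}, this is a contradiction. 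Your computations check out: the identity $\Psi''_u(1)=\phi'_u(1)+(q\gamma_q-1)\Psi'_u(1)$, the ordering $q\gamma_q<2<N+2<p\gamma_p$, and the sign analysis of $\psi_u$ (which needs $V(u)>0$ and $|u|_p^p>0$, both of which you justified) are all sound; note also that your $\phi_u$ coincides, up to the additive constant $-\tau\gamma_q|u|_q^q$, with the function $f$ that the paper itself analyzes in the proof of Lemma \ref{lemma3.3}, so your unimodality step is the same computation the paper performs there. As for what each approach buys: the paper's proof is quantitative and does not require any fiber-map monotonicity, but its final step defers the incompatibility of \eqref{3.6} with \eqref{1.12} to an argument borrowed from \cite{SN}; your proof uses \eqref{1.12} only through the qualitative conclusion of Lemma \ref{lemma3.1}, avoids all constant matching, and recycles analysis needed anyway for Lemma \ref{lemma3.3}, at the cost of the extra bookkeeping for $\psi_u$. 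Two cosmetic remarks: $\psi_u(0^+)=(2-q\gamma_q)|\bi u|_2^2$ is in fact strictly positive (not merely nonnegative) since $u\neq 0$ in $H^1(\R^N)$, and your sup-level contradiction can be made pointwise by choosing $t_1$ with $t_1^{N+2}V(u)=s_1$, the maximizer of $g$, which gives $0>\Psi_u(t_1)\geq g(s_1)>0$.
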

\begin{proof}
Suppose that there exists $u\in\mathcal{P}_a^0$, then we have $P(u)=0$ and $\Psi''_u(1)=0$, namely,
\begin{equation}\label{3.2}
|\bi u|_2^2+(N+2)V(u)-\gamma_p|u|_p^p-\tau\gamma_q|u|_q^q=0,
\end{equation}
\begin{equation}\label{3.3}
|\bi u|_2^2+(N+2)(N+1)V(u)-\gamma_p(p\gamma_p-1)|u|_p^p-\tau\gamma_q(q\gamma_q-1)|u|_q^q=0.
\end{equation}
Using \eqref{3.2} and \eqref{3.3}, we obtain that
\begin{equation}\label{3.4}
(p\gamma_p-2)|\bi u|_2^2+(N+2)(p\gamma_p-(N+2))V(u)-\tau\gamma_q(p\gamma_p-q\gamma_q)|u|_q^q=0.
\end{equation}
Since $p\gamma_p>N+2$ and $p\gamma_p>q\gamma_q$ for $2<q<2+\frac{4}{N}$ and $4+\frac{4}{N}<p<2\cdot2^*$, by using \eqref{3.4}, we have that
$$ (N+2)(p\gamma_p-(N+2))V(u)\leq\tau\gamma_q(p\gamma_p-q\gamma_q)|u|_q^q, $$
which, combined with \eqref{2.4} and \eqref{2.5}, implies that
\begin{equation}\label{3.5}
V(u)\leq \frac{\tau\gamma_q C_{N,q}(p\gamma_p-q\gamma_q)}{(N+2)(p\gamma_p-(N+2))}a^{\frac{4N-q(N-2)}{2(N+2)}}(V(u))^{\frac{q\gamma_q}{N+2}}.
\end{equation}
Similarly, we also have
$$V(u)\leq \frac{\gamma_p C_{N,p}(p\gamma_p-q\gamma_q)}{(N+2)(N+2-q\gamma_q)}a^{\frac{4N-p(N-2)}{2(N+2)}}(V(u))^{\frac{p\gamma_p}{N+2}},$$
which, together with \eqref{3.5}, implies that
\begin{equation}\label{3.6}
\begin{aligned}
&\left(\frac{(N+2)(N+2-q\gamma_q)}{\gamma_pC_{N,p}(p\gamma_p-q\gamma_q)}\right)^{N+2-q\gamma_q}
\left(\frac{(N+2)(p\gamma_p-N-2)}{\gamma_qC_{N,q}(p\gamma_p-q\gamma_q)}\right)^{p\gamma_p-(N+2)}\\
&\quad\leq\left(\tau a^{\frac{4N-q(N-2)}{2(N+2)}}\right)^{p\gamma_p-(N+2)}\left( a^{\frac{4N-p(N-2)}{2(N+2)}}\right)^{N+2-q\gamma_q}.
\end{aligned}
\end{equation}
By a similar argument of Lemma 5.2 of \cite{SN}, we can obtain that this is contradiction with \eqref{1.12}. Therefore, we have that $\mathcal{P}_a^0=\emptyset$.
\end{proof}
\begin{lemma}\label{lemma3.3}
For any $0\neq u\in D_a$, if $a$ and $\tau$ are positive and satisfy the inequality \eqref{1.12}, then the function $\Psi_u(t)$ has exactly two critical points $s_u<t_u\in\R^+$ and two zero points $c_u<d_u\in\R^+$ such that $s_u$ is a local minimum point for $\Psi_u(t)$ and $t_u$ is a global maximum point for $\Psi_u(t)$, with $s_u<c_u<t_u<d_u$. Moreover,
\begin{itemize}
  \item [(i)] $V(t\star u)\leq R_0$ for every $t\leq c_u$, $s_u\star u\in \mathcal{P}_a^+$ and
$$ I(s_u\star u)=\min\{I(t\star u)\mid t\in\R^+\text{ and }V(t\star u)<R_0\}<0.$$
  \item [(ii)] $t_u\star u\in \mathcal{P}_a^-$ and
$$ I(t_u\star u)=\max\{I(t\star u)\mid t\in\R^+\}>0.$$
  \item [(iii)] The maps $u\mapsto s_u\in\R^+$ and $u\mapsto t_u\in\R^+$ are of class $\mathcal{C}^1$.
\end{itemize}
\end{lemma}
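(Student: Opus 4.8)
The plan is to reduce the entire statement to a one--variable analysis of the fiber map $\Psi_u$ and then transfer the information back to $\mathcal{P}_a^{\pm}$ through the auxiliary function $g$ of Lemma \ref{lemma3.1}. Fix $0\neq u\in D_a$; since $u\neq 0$ in $H^1$ we have $|\bi u|_2^2>0$ and $V(u)>0$, so all four coefficients of $\Psi_u(t)=\frac12 t^2|\bi u|_2^2+t^{N+2}V(u)-\frac1p t^{p\gamma_p}|u|_p^p-\frac{\tau}{q}t^{q\gamma_q}|u|_q^q$ are strictly positive. The decisive structural fact is the exponent ordering $q\gamma_q<2<N+2<p\gamma_p$, which is exactly what $2<q<2+\frac4N<4+\frac4N<p$ provides. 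From it one reads off the boundary behaviour: $\Psi_u(0^+)=0$ with $\Psi_u(t)<0$ for small $t$ (the term $-\frac{\tau}{q}t^{q\gamma_q}|u|_q^q$ dominates), and $\Psi_u(t)\to-\infty$ as $t\to+\infty$ (the term $-\frac1p t^{p\gamma_p}|u|_p^p$ dominates). To see that $\Psi_u$ becomes strictly positive somewhere, I would use that the $\star$--action preserves the $L^2$--norm, so $t\star u\in D_a$ for all $t$ and \eqref{3.1} applies to $t\star u$, giving the pointwise bound $\Psi_u(t)=I(t\star u)\geq \frac12 t^2|\bi u|_2^2+g\big(t^{N+2}V(u)\big)$. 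This is where \eqref{1.12} enters: by Lemma \ref{lemma3.1} it guarantees $g>0$ on the nonempty interval $(R_0,R_1)$, so choosing $t$ with $t^{N+2}V(u)\in(R_0,R_1)$ forces $\Psi_u(t)>0$, whence $\max_{t>0}\Psi_u>0$.

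To count critical points I would not attack $\Psi_u'$ directly but reduce its sign twice. Using \eqref{1.9}, set $\phi(t):=t^{1-q\gamma_q}\Psi_u'(t)$ to remove the lowest exponent, and then $\psi(t):=t^{q\gamma_q-N-1}\phi'(t)$. A short computation shows $\psi$ is the sum of a strictly decreasing power $t^{-N}$ with positive coefficient (positive because $q\gamma_q<2$), a positive constant, and a second strictly decreasing term $-(\text{const})\,t^{p\gamma_p-N-2}$ (with exponent positive because $p\gamma_p>N+2$); hence $\psi$ decreases strictly from $+\infty$ to $-\infty$ and has a unique zero. This forces $\phi$ to be strictly increasing then strictly decreasing. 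Since $\phi(0^+)=-\tau\gamma_q|u|_q^q<0$, $\phi(+\infty)=-\infty$, and $\phi$ is positive somewhere by the previous paragraph, $\phi$ has exactly two zeros $s_u<t_u$, negative outside and positive inside $(s_u,t_u)$. As $\operatorname{sign}\Psi_u'=\operatorname{sign}\phi$, the map $\Psi_u$ is decreasing, increasing, then decreasing: $s_u$ is a local minimum and $t_u$ the global maximum, with $\Psi_u(s_u)<\Psi_u(0^+)=0<\Psi_u(t_u)$. Applying the intermediate value theorem on $(s_u,t_u)$ and on $(t_u,+\infty)$ then yields the two zeros $c_u<d_u$ with $s_u<c_u<t_u<d_u$, and $\Psi_u>0$ precisely on $(c_u,d_u)$.

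Next I would package this into (i)--(ii). Let $\bar t$ be defined by $\bar t^{N+2}V(u)=R_0$; the lower bound evaluated there gives $\Psi_u(\bar t)\geq \frac12\bar t^2|\bi u|_2^2+g(R_0)>0$, so $\bar t\in(c_u,d_u)$, i.e. $c_u<\bar t$. Consequently $V(t\star u)=t^{N+2}V(u)\leq R_0$ for every $t\leq c_u$, and $\{t:V(t\star u)<R_0\}=(0,\bar t)$, on which the established shape of $\Psi_u$ makes $s_u$ the unique minimizer with value $\Psi_u(s_u)<0$; this proves (i)'s minimality statement, while $t_u$ being the global maximizer with $\Psi_u(t_u)>0$ proves (ii)'s maximality statement. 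For the membership claims, $\Psi_u'(s_u)=\Psi_u'(t_u)=0$ gives $P(s_u\star u)=P(t_u\star u)=0$ via \eqref{1.9}, so both points lie in $\mathcal P_a$; the local minimum/maximum character gives $\Psi''_{s_u\star u}(1)=s_u^2\Psi_u''(s_u)\geq 0$ and $\Psi''_{t_u\star u}(1)=t_u^2\Psi_u''(t_u)\leq 0$, and Lemma \ref{lemma3.2} ($\mathcal P_a^0=\emptyset$) upgrades these to strict inequalities, yielding $s_u\star u\in\mathcal P_a^+$ and $t_u\star u\in\mathcal P_a^-$.

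Finally, (iii) I would obtain from the implicit function theorem applied to the relation $\Psi_u'(t)=0$, the non-degeneracy $\Psi_u''(s_u)>0$ and $\Psi_u''(t_u)<0$ (again furnished by Lemma \ref{lemma3.2}) giving invertibility in the $t$--variable. I expect two places to be the real work. Conceptually, the crux is the comparison with $g$: the bound $\Psi_u\geq \frac12 t^2|\bi u|_2^2+g(t^{N+2}V(u))$ must be sharp enough to force positivity of the fiber map for \emph{every} $u\in D_a\setminus\{0\}$ and to place the first zero $c_u$ strictly before the threshold $\bar t$; this is exactly what \eqref{1.12} delivers, and it is what glues the one--variable picture to the geometry of the relaxed constraint $\mathcal P_a$. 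Technically, the delicate point in (iii) is that $u\mapsto V(u)$ is only Gateaux-- (not Fr\'echet/$\mathcal C^1$--) differentiable on $X$ for $N\geq 2$, so the hypotheses of the implicit function theorem in the $u$--variable must be checked with care rather than cited naively; I would establish the required regularity along the admissible variations actually used later (or on the radial subspace), the non-degeneracy from Lemma \ref{lemma3.2} being the essential input that keeps the argument applicable.
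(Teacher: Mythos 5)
Your proposal is correct and follows essentially the same route as the paper: lower-bounding the fiber map by $g(t^{N+2}V(u))$ to produce the positivity window from \eqref{1.12}, counting critical points by reducing $\Psi_u'(t)=0$ to a function with a single interior maximum (your explicit $\phi,\psi$ reduction is just the detailed version of the paper's ``direct calculation'' on its auxiliary $f$), upgrading $\Psi_u''\gtrless 0$ to strict via $\mathcal{P}_a^0=\emptyset$ (Lemma \ref{lemma3.2}), and invoking the implicit function theorem for (iii). Your refinements — keeping the $\frac12 t^2|\bi u|_2^2$ term in the lower bound to get $c_u<\bar t$ directly, and flagging the regularity of $u\mapsto V(u)$ needed for the implicit function theorem — are sound additions but do not change the argument.
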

\begin{proof}
Let $u\in D_a\setminus\{0\}$. We first prove that $\Psi_u(t)$ has at least two critical points.\\
By \eqref{3.1}, we obtain that
$$\Psi_u(t)=I(t\star u)\geq g(V(t\star u))=g(t^{N+2}V(u)).$$
Since $\Psi_u(t)$ is $\mathcal{C}^2$, positive on $\left( \left( \frac{R_0}{V(u)} \right)^{\frac{1}{N+2}}, \left( \frac{R_1}{V(u)} \right)^{\frac{1}{N+2}} \right)$,
and satisfies $\Psi_u(0^+)=0^-$ and $\lim\limits_{t\rightarrow+\infty}\Psi_u(t)=-\infty$,
it has at least two critical points:
a local minimum point $s_u$ in $\left(0,\left( \frac{R_0}{V(u)} \right)^{\frac{1}{N+2}} \right)$ with $\Psi_u(s_u)<0$,
and a local maximum point $t_u>s_u$ with $\Psi_u(t_u)>0$.

Next we show there are no other critical points.\\
By \eqref{1.9}, $\Psi'_u(t)=0$ if and only if
\begin{equation}\label{3.7}
f(t)=\frac{\tau N(q-2)}{2q}|u|_q^q,
\end{equation}
where
\[ f(t)=|\nabla u|_2^2 t^{2-\frac{N(q-2)}{2}}+(N+2)V(u)t^{N+2-\frac{N(q-2)}{2}}-\frac{N(p-2)}{2p}|u|_p^pt^{\frac{N(p-q)}{2}}.\]
Direct calculation shows $f(t)$ has exactly one maximum point for $t>0$. Thus \eqref{3.7} has at most two solutions.
Therefore $\Psi_u(t)$ has exactly two critical points.
With \eqref{1.9}, this implies $s_u\star u,t_u\star u\in\mathcal{P}_a$
and shows $t_u$ is the global maximum point of $\Psi_u(t)$.

In addition, we have $V(t \star u) \leq R_0$ for all $t \leq c_u$.
If this is false, there exists $t \leq c_u$ such that $V(t \star u) > R_0$. Since $I(t \star u) \leq 0$ for $t \leq c_u$, we get $V(t \star u) \geq R_1$. From $R_0 < V(t_u \star u) < R_1$ and the scaling $V(t \star u) = t^{N+2} V(u)$, we see that $t^{N+2} V(u) \geq R_1 > V(t_u \star u) = t_u^{N+2} V(u)$. This implies $t > t_u$ (since $V(u) > 0$), contradicting $t \leq c_u < t_u$. By the minimality property, $\Psi''_{s_u\star u}(1)=\Psi''_{u}(s_u) \geq 0$. As $\mathcal{P}_a^0 = \emptyset$, we have $s_u \star u \in \mathcal{P}_a^+$.
Similarly, $t_u \star u \in \mathcal{P}_a^-$.

Finally, using the implicit function Theorem to the $\mathcal{C}^1$ function $g(t,u):\R^+\times D_a\mapsto\R$ defined by
$$ g_u(t)=g(t,u)=\Psi'_u(t),$$
then we obtain that $u\mapsto s_u\in \R^+$ is of class $\mathcal{C}^1$ due to $g_u(s_u)=0$ and $\partial_s g_u(s_u)=\Psi''_u(s_u)>0$. Similarly, we can also deduce that $u\mapsto t_u\in \R^+$ is of class $\mathcal{C}^1$.

This completes the proof of the Lemma \ref{lemma3.3}.
\end{proof}

\subsection{Properties of $M^{\pm}_a$}
For $k>0$, we define
$$ A_k:=\{u\in D_a\setminus\{0\}\mid V(u)<k\}\text{ and }m(a,\tau):=\inf_{u\in A_{R_0}}I(u),$$
then using the Lemma \ref{lemma3.3}, we have the following Corollary.
\begin{corollary}\label{corollary3.4}
The set $\mathcal{P}_a^+$ is contained in $A_{R_0}$ and
$$ \sup_{u\in\mathcal{P}_a^+}I(u)\leq 0\leq\inf_{u\in\mathcal{P}_a^-}I(u). $$
\end{corollary}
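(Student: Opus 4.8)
The plan is to reduce both assertions to a single observation: for $u\in\mathcal{P}_a^\pm$ the natural dilation parameter supplied by Lemma~\ref{lemma3.3} is forced to equal $1$. To see this for $u\in\mathcal{P}_a^+\subset D_a\setminus\{0\}$, I would read off from \eqref{1.9} that $\Psi'_u(1)=P(u)=0$, so $t=1$ is a critical point of the fibre map $\Psi_u$, while $\Psi''_u(1)>0$ makes it a strict local minimum. Lemma~\ref{lemma3.3} says $\Psi_u$ has exactly two critical points, the local minimum $s_u$ and the global maximum $t_u$; and since $\mathcal{P}_a^0=\emptyset$ by Lemma~\ref{lemma3.2}, every critical point $v\star u$ has $\Psi''_u(v)\neq0$, so $\Psi''_u(s_u)>0$ and $\Psi''_u(t_u)<0$. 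Thus $s_u$ is the unique critical point at which $\Psi''_u$ is positive, which forces $s_u=1$, i.e.\ $u=1\star u=s_u\star u$. The parallel argument with $\Psi''_u(1)<0$ shows every $u\in\mathcal{P}_a^-$ satisfies $t_u=1$, i.e.\ $u=t_u\star u$.

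Granting this, the containment is immediate: the proof of Lemma~\ref{lemma3.3} locates the minimum at $s_u\in\bigl(0,(R_0/V(u))^{1/(N+2)}\bigr)$, hence $s_u^{N+2}V(u)<R_0$, and with $s_u=1$ this reads $V(u)<R_0$, i.e.\ $u\in A_{R_0}$; therefore $\mathcal{P}_a^+\subset A_{R_0}$. For the two-sided bound I would simply transport the sign information already recorded in Lemma~\ref{lemma3.3}: if $u\in\mathcal{P}_a^+$ then $u=s_u\star u$ and part~(i) gives $I(u)=I(s_u\star u)<0$, so $\sup_{u\in\mathcal{P}_a^+}I(u)\leq0$; if $u\in\mathcal{P}_a^-$ then $u=t_u\star u$ and part~(ii) gives $I(u)=I(t_u\star u)>0$, so $\inf_{u\in\mathcal{P}_a^-}I(u)\geq0$. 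Chaining these yields $\sup_{u\in\mathcal{P}_a^+}I(u)\leq0\leq\inf_{u\in\mathcal{P}_a^-}I(u)$.

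The step I expect to be the crux is precisely the identification $s_u=1$ (respectively $t_u=1$): it is where the abstract defining conditions of $\mathcal{P}_a^\pm$ get matched to the concrete unique critical points of Lemma~\ref{lemma3.3}, and it genuinely needs $\mathcal{P}_a^0=\emptyset$ to promote ``local extremum'' into a strict sign for $\Psi''_u$. Everything after that identification is bookkeeping on top of parts~(i)--(ii), so I would keep that portion short.
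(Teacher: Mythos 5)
Your proof is correct and follows essentially the same route as the paper, which deduces the corollary directly from Lemma~\ref{lemma3.3}: the heart of the matter is exactly the identification $s_u=1$ for $u\in\mathcal{P}_a^+$ and $t_u=1$ for $u\in\mathcal{P}_a^-$, obtained by matching the sign of $\Psi''_u(1)$ against the two critical points of $\Psi_u$ and using $\mathcal{P}_a^0=\emptyset$ (Lemma~\ref{lemma3.2}) to exclude degeneracy. Your extra care in extracting the strict bound $V(u)<R_0$ from the location $s_u\in\bigl(0,(R_0/V(u))^{1/(N+2)}\bigr)$ in the proof of Lemma~\ref{lemma3.3}, rather than from the non-strict inequality in part~(i), is a welcome refinement consistent with the definition of $A_{R_0}$.
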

\begin{lemma}\label{lemma3.5}
Let $a$ and $\tau$ be positive and satisfy the inequality \eqref{1.12}, we have that
\begin{equation}\label{3.8}
m(a,\tau)=\inf_{u\in\mathcal{P}_a}I(u)=\inf_{u\in\mathcal{P}_a^+}I(u)=M_a^+,\text{ with } m(a,\tau)\in(-\infty,0)
\end{equation}
and
\begin{equation}\label{3.9}
M_a^+:=\inf_{u\in\mathcal{P}_a^+}I(u)=\inf_{u\in D_a\setminus\{0\}}\min_{0<t\leq s_u}I(t\star u)
\end{equation}
and
\begin{equation}\label{3.10}
M_a^-:=\inf_{u\in\mathcal{P}_a^-}I(u)=\inf_{u\in D_a\setminus\{0\}}\max_{s_u<t\leq t_u}I(t\star u)>0.
\end{equation}
\end{lemma}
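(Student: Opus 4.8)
The plan is to establish the three displayed identities by systematically transferring each infimum over a constraint set into an infimum of the energy evaluated along the fibers $t\mapsto I(t\star u)$, using the fiber analysis of Lemma~\ref{lemma3.3} together with the one–dimensional picture of $g$ from Lemma~\ref{lemma3.1}. I would begin with $m(a,\tau)\in(-\infty,0)$. For the lower bound, every $u\in A_{R_0}$ satisfies $V(u)<R_0$, so \eqref{3.1} gives $I(u)\ge g(V(u))\ge\min_{[0,R_0]}g>-\infty$ by continuity of $g$ on the compact interval $[0,R_0]$; hence $m(a,\tau)>-\infty$. For the upper bound, take any $u\in D_a\setminus\{0\}$: by Lemma~\ref{lemma3.3}(i) the rescaling $s_u\star u$ lies in $\mathcal{P}_a^+\subset A_{R_0}$ (Corollary~\ref{corollary3.4}) and satisfies $I(s_u\star u)<0$, so $m(a,\tau)<0$.

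Next I would prove the chain $m(a,\tau)=\inf_{\mathcal{P}_a}I=\inf_{\mathcal{P}_a^+}I=M_a^+$ in \eqref{3.8}. Since $\mathcal{P}_a^+\subset A_{R_0}$, we get $m(a,\tau)\le M_a^+$ for free. Conversely, for any $u\in A_{R_0}$ we have $V(1\star u)=V(u)<R_0$, so $t=1$ is admissible in the minimization of Lemma~\ref{lemma3.3}(i); thus $I(s_u\star u)\le I(u)$ with $s_u\star u\in\mathcal{P}_a^+$, giving $M_a^+\le I(u)$ and hence $M_a^+\le m(a,\tau)$, so that $m(a,\tau)=M_a^+$. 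Finally, because $\mathcal{P}_a^0=\emptyset$ (Lemma~\ref{lemma3.2}) we have $\mathcal{P}_a=\mathcal{P}_a^+\cup\mathcal{P}_a^-$, and Corollary~\ref{corollary3.4} yields $\inf_{\mathcal{P}_a^+}I\le0\le\inf_{\mathcal{P}_a^-}I$; therefore $\inf_{\mathcal{P}_a}I=\inf_{\mathcal{P}_a^+}I=M_a^+$, completing \eqref{3.8}.

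For \eqref{3.9} and \eqref{3.10} I would exploit the monotonicity of $\Psi_u$ on the two subintervals determined by its critical points. On $(0,s_u]$ the map $\Psi_u$ is strictly decreasing, since its first critical point is the local minimum $s_u$ while $\Psi_u(0^+)=0^-$, whence $\min_{0<t\le s_u}I(t\star u)=I(s_u\star u)$; on $(s_u,t_u]$ it is strictly increasing, whence $\max_{s_u<t\le t_u}I(t\star u)=I(t_u\star u)$. Moreover the maps $u\mapsto s_u\star u$ and $u\mapsto t_u\star u$ send $D_a\setminus\{0\}$ onto $\mathcal{P}_a^+$ and $\mathcal{P}_a^-$ respectively, surjectivity holding because any $v\in\mathcal{P}_a^+$ has $s_v=1$ and any $v\in\mathcal{P}_a^-$ has $t_v=1$, so each is its own rescaling. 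Consequently the two infima over $D_a\setminus\{0\}$ coincide with $M_a^+$ and $M_a^-$, giving the stated identities.

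The remaining and most delicate point is the strict inequality $M_a^->0$, since Corollary~\ref{corollary3.4} only furnishes $M_a^-\ge0$; I would therefore argue quantitatively by locating a single scaling level at which the fiber energy is bounded below uniformly. Let $s_1$ denote the global maximum point of $g$ from Lemma~\ref{lemma3.1}, so that $g(s_1)>0$. For any $u\in D_a\setminus\{0\}$, set $t^*:=\left(s_1/V(u)\right)^{\frac{1}{N+2}}$, so that $V(t^*\star u)=(t^*)^{N+2}V(u)=s_1$; then \eqref{3.1} gives $I(t^*\star u)\ge g(s_1)$, and since $t_u$ is the global maximizer of the fiber, $\max_{s_u<t\le t_u}I(t\star u)=I(t_u\star u)\ge I(t^*\star u)\ge g(s_1)$. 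Taking the infimum over $u$ yields $M_a^-\ge g(s_1)>0$. The main obstacle is precisely this uniform positive lower bound: one must pin down a scaling level independent of $u$ (the maximizer of $g$) at which the energy is controlled from below by a fixed constant, rather than being merely positive pointwise along each fiber.
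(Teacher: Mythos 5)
Your proposal is correct and follows essentially the same route as the paper: the sandwich argument $m(a,\tau)\le M_a^+\le m(a,\tau)$ via Corollary \ref{corollary3.4} and the admissibility of $t=1$ in Lemma \ref{lemma3.3}(i), the identification of the fiber minima/maxima at $s_u$ and $t_u$ combined with $s_v=1$ (resp.\ $t_v=1$) on $\mathcal{P}_a^+$ (resp.\ $\mathcal{P}_a^-$) for \eqref{3.9}--\eqref{3.10}, and the uniform lower bound $M_a^-\ge g(s_{\max})>0$ obtained by evaluating each fiber at the scaling level where $V(t\star u)$ equals the maximum point of $g$. The paper's proof of the strict positivity is phrased for $u\in\mathcal{P}_a^-$ with $t_u=1$ rather than over all of $D_a\setminus\{0\}$, but this is the same argument.
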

\begin{proof}

Firstly, for any $u\in A_{R_0}$, \eqref{3.1} yields
\[ I(u)\geq g(V(u))\geq \min_{s\in(0,R_0)}g(s)>-\infty, \]
so $m(a,\tau)>-\infty$. Also,
\[ V(t\star u)<R_0 ~ \text{and} ~ I(t\star u)<0, ~\text{as} ~ t\to0, \]
which implies that $m(a,\tau)<0$.

By Corollary \ref{corollary3.4}, we obtain that $\mathcal{P}_a^+\subset A_{R_0}$. So
\begin{equation}\label{3.11}
m(a,\tau)=\inf_{u\in A_{R_0}}I(u)\leq\inf_{u\in\mathcal{P}_a^+}I(u).
\end{equation}
Lemma \ref{lemma3.3} tells us  that if $u\in A_{R_0}$, then $s_u\star u\in\mathcal{P}_a^+\subset A_{R_0}$. Thus
\[ I(s_u\star u)=\min_{t>0, V(t\star u)<R_0} I(t\star u) \leq I(u), \]
which means that
\[ \inf_{u\in\mathcal{P}_a^+}I(u)\leq \inf_{u\in A_{R_0}}I(u)= m(a,\tau). \]
Together with  \eqref{3.11}, we obtain that
\[ m(a,\tau)=\inf_{u\in A_{R_0}}I(u)=\inf_{u\in\mathcal{P}_a^+}I(u)=M_a^+. \]

In addition, applying  Lemma \ref{lemma3.1} and Corollary \ref{corollary3.4}, we have that $I(u)>0$ on $\mathcal{P}_a^-$. Thus
\[ \inf_{u\in\mathcal{P}_a}I(u)=\inf_{u\in\mathcal{P}_a^+}I(u). \]
Therefore \eqref{3.8} holds.

It follows from Lemma \ref{lemma3.3}   that for every $u\in D_a\setminus\{0\}$:
\begin{itemize}
\item There is a unique local minimum $s_u>0$ with $s_u\star u\in\mathcal{P}_a^+$ and $\Psi_u$ strictly decreasing on $(0,s_u]$
\item There is a unique global maximum $t_u$ with $t_u\star u\in\mathcal{P}_a^-$ and $\Psi_u$ strictly increasing on $(s_u,t_u]$
\end{itemize}
Therefore, for any $u\in\mathcal{P}_a^+$, we have that $s_u=1$. So
\[ I(u)=I(s_u\star u)=\min_{0<t\leq s_u}I(t\star u)\geq\inf_{v\in D_a\setminus\{0\}}\min_{0<t\leq s_v}I(t\star v), \]
which implies that
\begin{equation}\label{3.12}
\inf_{u\in\mathcal{P}_a^+}I(u)\geq\inf_{u\in D_a\setminus\{0\}}\min_{0<t\leq s_u}I(t\star u).
\end{equation}
For any $u\in D_a\setminus\{0\}$, we deduce that $s_u\star u\in\mathcal{P}_a^+$. Therefore,
\[ \min_{0<t\leq s_u}I(t\star u)=I(s_u\star u)\geq\inf_{v\in\mathcal{P}_a^+}I(v), \]
and thus
\[ \inf_{u\in D_a\setminus\{0\}}\min_{0<t\leq s_u}I(t\star u)\geq \inf_{u\in\mathcal{P}_a^+}I(u), \]
which, together with \eqref{3.12}, implies that
\[ \inf_{u\in\mathcal{P}_a^+}I(u)=\inf_{u\in D_a\setminus\{0\}}\min_{0<t\leq s_u}I(t\star u). \]

Using a similar method, one can show that
\[ \inf_{u\in\mathcal{P}_a^-}I(u)=\inf_{u\in D_a\setminus\{0\}}\max_{s_u<t\leq t_u}I(t\star u). \]
Moreover, let $s_{\max}$ denote the strict maximum of $g$ at positive level (see Lemma~\ref{lemma3.1}). For each $u \in \mathcal{P}_a^-$, there exists $t_u > 0$ such that $V(t_u \star u) = s_{\max}$. Since $u \in \mathcal{P}_a^-$, Lemma~\ref{lemma3.3} implies that $1$ is the unique strict maximum point of $\Psi_u$. Therefore,
\[
I(u) = \Psi_u(1) \geq \Psi_u(t_u) = I(t_u \star u) \geq g(t_u \star u) = g(s_{\max}) > 0.
\]
As $u \in \mathcal{P}_a^-$ was arbitrary, we conclude that
\[
\inf_{u \in \mathcal{P}_a^-} I(u) \geq  g(s_{\max}) > 0.
\]

This completes the proof.
\end{proof}

Let $X^{rad}$ be the radial subspace of $X$, i.e.,
$$X^{rad}:=\{u\in X\mid u(x)=u(|x|),x\in\R^N\}.$$
Define
\begin{equation}\label{3.13}
D_a^{rad}:=D_a\cap X^{rad}\text{ and } M_{r,a}^\pm:=\inf\limits_{u\in\mathcal{P}_a^\pm\cap X^{rad}}I(u).
\end{equation}
Similar to Lemma \ref{lemma3.5}, we also have
\begin{equation}\label{3.14}
M_{r,a}^+=\inf_{u\in D_a^{rad}\setminus\{0\}}\min_{0<t\leq s_u}I(t\star u)
\end{equation}
and
\begin{equation}\label{3.15}
M_{r,a}^-=\inf_{u\in D_a^{rad}\setminus\{0\}}\max_{s_u<t\leq t_u}I(t\star u).
\end{equation}
\begin{lemma}\label{lemma3.6}
Let $a$ and $\tau$ be positive and satisfy the inequality \eqref{1.12}, then we conclude that
$$ M_a^+=M_{r,a}^+ \text{ and } M_a^-=M_{r,a}^-. $$
\end{lemma}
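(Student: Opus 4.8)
The plan is to prove both equalities by symmetric decreasing (Schwarz) rearrangement combined with the free–scaling characterizations \eqref{3.9}--\eqref{3.10} and \eqref{3.14}--\eqref{3.15}. Since $\mathcal{P}_a^\pm\cap X^{rad}\subset\mathcal{P}_a^\pm$, the inequalities $M_a^\pm\le M_{r,a}^\pm$ are immediate from the definition \eqref{3.13}, so the whole content lies in proving the reverse inequalities $M_{r,a}^\pm\le M_a^\pm$. The conceptual obstacle to keep in mind throughout is that rearrangement need \emph{not} preserve the Pohozaev identity $P(u)=0$, so one cannot symmetrize directly inside $\mathcal{P}_a^\pm$; this is precisely why the relaxed/free–scaling characterizations are used to project back onto $\mathcal{P}_a^\pm$ after rearranging.

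The core ingredient I would establish first is a rearrangement inequality for $I$. For $u\in X$ let $u^*$ denote its Schwarz symmetrization; then $u^*\in X^{rad}$, and by equimeasurability $|u^*|_2=|u|_2$, $|u^*|_p=|u|_p$, $|u^*|_q=|u|_q$, while $|\bi u^*|_2\le|\bi u|_2$ by the Pólya--Szegő inequality. The key point is $V(u^*)\le V(u)$, which I would obtain by writing $V(u)=\tfrac14|\bi(u^2)|_2^2$, observing $(u^2)^*=(u^*)^2$, and applying the classical Pólya--Szegő inequality to the nonnegative function $u^2$. Together these give $I(u^*)\le I(u)$. Moreover the scaling is equivariant under rearrangement, $(t\star u)^*=t\star u^*$ for all $t>0$ (both sides are symmetric decreasing with the same distribution function), so one also gets the termwise bound $I(t\star u^*)\le I(t\star u)$ for every $t>0$. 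Note $u\ne0\Rightarrow u^*\ne0$ and $u\in D_a\Rightarrow u^*\in D_a^{rad}$.

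For the mountain-pass level I would use that, by Lemma~\ref{lemma3.3}(ii), $\max_{s_u<t\le t_u}I(t\star u)=I(t_u\star u)=\max_{t>0}I(t\star u)$, so \eqref{3.10} reads $M_a^-=\inf_{u\in D_a\setminus\{0\}}\max_{t>0}I(t\star u)$. Fixing $u\in D_a\setminus\{0\}$, the function $t_{u^*}\star u^*$ lies in $\mathcal{P}_a^-\cap X^{rad}$, whence
\[
M_{r,a}^-\le I(t_{u^*}\star u^*)=\max_{t>0}I(t\star u^*)\le\max_{t>0}I(t\star u),
\]
the last step being the termwise bound above. Taking the infimum over $u\in D_a\setminus\{0\}$ gives $M_{r,a}^-\le M_a^-$, and hence $M_a^-=M_{r,a}^-$. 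For the local-minimum level the scaling range in \eqref{3.9} depends on $u$, so instead I would invoke $M_a^+=m(a,\tau)=\inf_{u\in A_{R_0}}I(u)$ from Lemma~\ref{lemma3.5}, using that $A_{R_0}$ is stable under symmetrization: if $V(u)<R_0$ then $V(u^*)\le V(u)<R_0$, so $u^*\in A_{R_0}\cap X^{rad}$. For such $u$, Lemma~\ref{lemma3.3}(i) applied to $u^*$ gives $I(s_{u^*}\star u^*)=\min\{I(t\star u^*)\mid V(t\star u^*)<R_0\}\le I(u^*)$, since $t=1$ is admissible because $V(u^*)<R_0$; and $s_{u^*}\star u^*\in\mathcal{P}_a^+\cap X^{rad}$ yields $M_{r,a}^+\le I(s_{u^*}\star u^*)$. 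Chaining these with $I(u^*)\le I(u)$ and passing to the infimum over $u\in A_{R_0}$ gives $M_{r,a}^+\le m(a,\tau)=M_a^+$.

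The only genuinely technical step is the inequality $V(u^*)\le V(u)$ together with the well-posedness of Schwarz symmetrization on $X$ (for $N=1$ this is ordinary symmetric-decreasing rearrangement on the line); everything else is bookkeeping with the fiber maps. I expect this Pólya--Szegő estimate for the quasi-linear term $V$ to be the main obstacle, and I would settle it cleanly via the reformulation $V(u)=\tfrac14|\bi(u^2)|_2^2$ as indicated above.
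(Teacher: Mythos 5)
Your proposal is correct, and it rests on the same central ingredient as the paper's proof: Schwarz rearrangement together with the termwise bound $I(t\star u^*)\le I(t\star u)$, which both arguments reduce to $|\bi u^*|_2\le |\bi u|_2$, $V(u^*)\le V(u)$ (via $(u^*)^2=(|u|^2)^*$ and P\'olya--Szeg\H{o}), and equimeasurability of the $L^p$, $L^q$ norms. Where you genuinely diverge is in how the conclusion is extracted. The paper stays entirely inside the fiber-map picture: it notes that rearrangement decreases $\Psi'_u$ and $\Psi''_u$ pointwise, deduces the ordering $0<s_u\le s_{u^*}<t_{u^*}\le t_u$ of the fiber critical points, compares the constrained minima/maxima over the resulting nested ranges, and then invokes the minimax characterizations \eqref{3.9}--\eqref{3.10} together with their radial counterparts \eqref{3.14}--\eqref{3.15}. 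You bypass the critical-point ordering altogether: for $M_a^-$ you use that the constrained maximum in \eqref{3.10} coincides with the global maximum over $t>0$ (Lemma \ref{lemma3.3}(ii)), so the explicit radial competitor $t_{u^*}\star u^*\in\mathcal{P}_a^-\cap X^{rad}$ immediately yields $M_{r,a}^-\le\max_{t>0}I(t\star u)$; for $M_a^+$ you route through the identity $M_a^+=m(a,\tau)=\inf_{A_{R_0}}I$ from \eqref{3.8}, the stability of $A_{R_0}$ under rearrangement, and the projection $s_{u^*}\star u^*\in\mathcal{P}_a^+\cap X^{rad}$ provided by Lemma \ref{lemma3.3}(i). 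This buys you two simplifications: you never need the monotonicity of $\Psi'$, $\Psi''$ under rearrangement, and you never need the radial minimax formulas \eqref{3.14}--\eqref{3.15}, since you estimate $M_{r,a}^\pm$ directly from the definition \eqref{3.13} by exhibiting explicit radial elements of $\mathcal{P}_a^\pm$. The paper's route, in exchange, treats the two levels symmetrically and records the ordering of $s_u,s_{u^*},t_{u^*},t_u$, which is of independent use. A further merit of your write-up is that you actually justify the key inequality $V(u^*)\le V(u)$ --- the only place where the quasi-linear term could cause trouble --- whereas the paper asserts $I(t\star u^*)\le I(t\star u)$ without comment.
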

\begin{proof}
Since $\mathcal{P}_a^+\cap X^{rad}\subset\mathcal{P}_a^+$ and $\mathcal{P}_a^-\cap X^{rad}\subset\mathcal{P}_a^-$, it is trivial that $$M_a^+\leq M_{r,a}^+ \text{~ and~ } M_a^-\leq M_{r,a}^-.$$
Moreover, by Lemma \ref{lemma3.3}, for each $u\in D_a\setminus\{0\}$, there exist positive numbers $s_u$ and $t_u$ with $s_u<t_u$ such that
$$s_u\star u\in \mathcal{P}_a^+ ~~\text{and}~~ t_u\star u\in \mathcal{P}_a^-.$$
Let $u^*\in D_a\cap H_r^1(\R^N)$ be the Schwarz rearrangement of $|u|$. Then
\[ I(t\star u^*)\leq I(t\star u)~~ \text{for all}~~ t\in\R^+. \]
Recalling the definition of $\Psi'_u(t)$ and $\Psi''_u(t)$,   we deduce that
$$ \Psi'_{u^*}(t)\leq \Psi'_u(t) \text{ and }\Psi''_{u^*}(t)\leq \Psi''_u(t)\text{ for all }t\in\R^+, $$
which indicates that $0<s_u\leq s_{u^*}<t_{u^*}\leq t_u$. Therefore, we conclude that
$$ \min_{0<t\leq s_{u^*}}I(t\star u^*)\leq\min_{0<t\leq s_u}I(t\star u)$$
and
$$ \max_{s_{u^*}<t\leq t_{u^*}}I(t\star u^*)\leq\max_{s_u<t\leq t_u}I(t\star u),$$
which implies
$$ \inf_{u\in D_a^{rad}\setminus\{0\}}\min_{0<t\leq s_{u^*}}I(t\star u^*)\leq \inf_{u\in D_a\setminus\{0\}}\min_{0<t\leq s_u}I(t\star u)$$
and
$$ \inf_{u\in D_a^{rad}\setminus\{0\}}\max_{s_{u^*}<t\leq t_{u^*}}I(t\star u^*)\leq \inf_{u\in D_a\setminus\{0\}} \max_{s_u<t\leq t_u}I(t\star u).$$
Combining  \eqref{3.9}, \eqref{3.10}, \eqref{3.14} and \eqref{3.15}, we can have that
$$M_a^+\geq M_{r,a}^+~~\text{and}~~M_a^-\geq M_{r,a}^-.$$
Thus
$$ M_a^+=M_{r,a}^+ \text{ and } M_a^-=M_{r,a}^-.$$
\end{proof}
\begin{lemma}\label{lemma3.7}
Let $ N\geq 1$, $2<q<2+\frac{4}{N}<4+\frac{4}{N}<p<2\cdot2^*$, and let $a$ and $\tau$ be positive and satisfy the inequality \eqref{1.12}. Then $M_a^+$ can be achieved by $u^+$ and $M_a^-$ can be achieved by $u^-$. Moreover, $u^+$ and $u^-$ are radially symmetric.
\end{lemma}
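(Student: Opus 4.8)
The plan is to produce both minimizers by the direct method in the radial subspace, where boundedness plus a compact embedding yield a nontrivial weak limit that can be projected onto the correct component of $\mathcal{P}_a$. By Lemma~\ref{lemma3.6} we have $M_a^\pm=M_{r,a}^\pm$, so it suffices to minimize over radial functions; fix minimizing sequences $\{u_n\}\subset\mathcal{P}_a^\pm\cap X^{rad}$ with $I(u_n)\to M_a^\pm$. The first task is to bound $\{u_n\}$ in $X$, i.e.\ to control $|\bi u_n|_2^2$, $|u_n|_2^2$ and $V(u_n)$. For the $+$ level this is immediate: by Corollary~\ref{corollary3.4} the sequence lies in $A_{R_0}$, so $V(u_n)<R_0$ and $|u_n|_2^2\le a$, whence \eqref{2.4} bounds $|u_n|_p^p,|u_n|_q^q$ and the identity $\tfrac12|\bi u_n|_2^2=I(u_n)-V(u_n)+\tfrac1p|u_n|_p^p+\tfrac{\tau}{q}|u_n|_q^q$ bounds the gradient.

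The boundedness at the $-$ level is the main obstacle, because the exponent $p$ is not merely $L^2$-supercritical but lies above $2^*$. The remedy is to exploit $P(u_n)=0$ to cancel the dangerous term $|u_n|_p^p$. Solving \eqref{1.7} for $\gamma_p|u_n|_p^p=|\bi u_n|_2^2+(N+2)V(u_n)-\tau\gamma_q|u_n|_q^q$ and substituting into $I(u_n)=M_a^-+o(1)$ produces an identity of the shape
$$
\Big(\tfrac12-\tfrac{1}{p\gamma_p}\Big)|\bi u_n|_2^2+\Big(1-\tfrac{N+2}{p\gamma_p}\Big)V(u_n)=M_a^-+o(1)+\tau\Big(\tfrac1q-\tfrac{\gamma_q}{p\gamma_p}\Big)|u_n|_q^q,
$$
where all three displayed coefficients are positive, since $p\gamma_p>N+2$ and $q\gamma_q<p\gamma_p$ on the admissible range. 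As $q$ is mass-subcritical, \eqref{2.4} gives $|u_n|_q^q\le C_{N,q}a^{\frac{4N-q(N-2)}{2(N+2)}}V(u_n)^{\frac{q\gamma_q}{N+2}}$ with exponent $\frac{q\gamma_q}{N+2}<1$; the right-hand side therefore grows strictly sublinearly in $V(u_n)$ while the left-hand side grows linearly, which forces $V(u_n)$ to be bounded, and then $|\bi u_n|_2^2$, $|u_n|_p^p$ and $|u_n|_q^q$ as well.

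With boundedness secured I would pass to the limit. Using radial symmetry, a bounded sequence in $X^{rad}$ admits a subsequence converging strongly in $L^t(\R^N)$ for every $2<t<2\cdot2^*$ (writing $v_n=u_n^2$, the bounds on $V(u_n)=\tfrac14|\bi v_n|_2^2$, on $|v_n|_1=|u_n|_2^2$, and via \eqref{2.4} on $|v_n|_2^2=|u_n|_4^4$ place $\{v_n\}$ in a bounded radial subset of $H^1$, to which the classical compact embedding applies, with an interpolation against the $L^1$-bound for the low exponents); this is exactly where the radial reduction and the strict inequalities $p,q<2\cdot2^*$ are used. Hence $u_n\rightharpoonup u^\pm$ in $H^1$ with $u_n\to u^\pm$ strongly in $L^p$ and $L^q$, so that $|u^\pm|_2^2\le a$, and, by weak lower semicontinuity of $|\bi\cdot|_2^2$ and $V$, $I(u^\pm)\le\liminf I(u_n)=M_a^\pm$. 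Nontriviality follows from the sign of the level: if $u^+=0$ then $|u_n|_p,|u_n|_q\to0$ gives $I(u_n)\ge V(u_n)+o(1)\ge o(1)$, contradicting $M_a^+<0$; if $u^-=0$ then $P(u_n)=0$ forces $|\bi u_n|_2^2+(N+2)V(u_n)\to0$, hence $I(u_n)\to0$, contradicting $M_a^->0$.

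It remains to place the limits in $\mathcal{P}_a^\pm$, which I would do through the fiber map, thereby avoiding the nondifferentiability of $I$ on $X$. For the $+$ case I first exclude the boundary value $V(u^+)=R_0$: if it held then \eqref{3.1} and Lemma~\ref{lemma3.1} would give $I(u^+)\ge g(V(u^+))=g(R_0)=0>M_a^+$, impossible; hence $u^+\in A_{R_0}$ and $I(u^+)\ge m(a,\tau)=M_a^+$, so $I(u^+)=M_a^+$. Since $V(u^+)<R_0$, the value $t=1$ is admissible in Lemma~\ref{lemma3.3}(i), so $I(s_{u^+}\star u^+)=\min_{0<t\le s_{u^+}}I(t\star u^+)\le I(u^+)=M_a^+\le I(s_{u^+}\star u^+)$; as $\Psi_{u^+}$ is strictly monotone on either side of its unique local minimum $s_{u^+}$, equality forces $s_{u^+}=1$, i.e.\ $u^+\in\mathcal{P}_a^+$ attains $M_a^+$. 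For the $-$ case I use the min--max characterization \eqref{3.10}: since $u^-\in D_a^{rad}\setminus\{0\}$, Lemma~\ref{lemma3.3}(ii) gives $t_{u^-}\star u^-\in\mathcal{P}_a^-$, so $I(t_{u^-}\star u^-)\ge M_a^-$; conversely, for fixed $t$ the map $v\mapsto I(t\star v)$ is weakly lower semicontinuous (again by \eqref{1.8} and strong $L^p,L^q$ convergence), whence $I(t_{u^-}\star u^-)\le\liminf_n I(t_{u^-}\star u_n)\le\liminf_n\max_{t>0}I(t\star u_n)=\liminf_n I(u_n)=M_a^-$, using that each $u_n\in\mathcal{P}_a^-$ realizes its fiber maximum at $t=1$. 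Thus $t_{u^-}\star u^-\in\mathcal{P}_a^-$ attains $M_a^-$, and after relabelling it as $u^-$ both minimizers lie in the radial class, the scaling $\star$ preserving radial symmetry and the $L^2$ norm.
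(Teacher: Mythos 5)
Your strategy---radial minimizing sequences, boundedness via the Pohozaev constraint, compactness, weak limits, exclusion of vanishing, and fiber-map arguments to land in $\mathcal{P}_a^\pm$---mirrors the paper's proof, and several of your variants are harmless or even cleaner: the $A_{R_0}$-based bound at the $+$ level, the combination that eliminates $|u_n|_p^p$ at the $-$ level (your $I-\tfrac{1}{p\gamma_p}P$ is literally the paper's $I-\tfrac{2}{N(p-2)}P$), and the identification of the $-$ minimizer as $t_{u^-}\star u^-$ via weak lower semicontinuity along the fiber is exactly the paper's chain $M_a^-\le I(t_{u^-}\star u^-)\le\liminf I(t_{u^-}\star u_n^-)\le\liminf I(u_n^-)=M_a^-$.

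The genuine gap is the compactness step in dimension $N=1$, which the lemma (and Theorem \ref{Th1}, which requires $1\le N\le4$) includes. You reduce via Lemma~\ref{lemma3.6} to sequences that are merely \emph{radial} and then invoke ``the classical compact embedding'' for bounded radial subsets of $H^1$ applied to $v_n=u_n^2$. Strauss-type compactness $H_{rad}^1(\R^N)\hookrightarrow L^t(\R^N)$ holds only for $N\ge2$: for $N=1$ radial means even, and $v_n(x)=\phi(x-n)+\phi(x+n)$ is even and bounded in $H^1(\R)$ but converges weakly to $0$ without converging strongly in any $L^t$. So for $N=1$ your sequence may split, and the strong $L^p$, $L^q$ convergence on which the whole limiting argument rests is unjustified. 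The paper avoids this by working with the Schwarz rearrangements of $|u_n^\pm|$, i.e. with radially \emph{non-increasing} sequences: for such functions one has the pointwise decay $|u_n(x)|\le C|x|^{-N/2}|u_n|_2$, which gives uniform tail smallness of $\int|u_n|^t$ for every $t>2$ and, combined with local Rellich compactness, strong $L^t$ convergence in all dimensions $N\ge1$. Your proof is repaired by making the same replacement: rearrange the minimizing sequence and reproject onto $\mathcal{P}_a^\pm$ by the maps $s_u$, $t_u$ of Lemma~\ref{lemma3.3} (dilation preserves monotonicity, and the inequalities in the proof of Lemma~\ref{lemma3.6} show the reprojected sequence is still minimizing).

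A much smaller point: when you conclude $s_{u^+}=1$, strict monotonicity ``on either side of $s_{u^+}$'' does not by itself exclude the case $1>t_{u^+}$, where $\Psi_{u^+}$ is again decreasing. The clean argument is that $1$ is an interior minimizer of $\Psi_{u^+}$ on the open set $\{t>0: V(t\star u^+)<R_0\}$ (since $t\star u^+\in A_{R_0}$ there and $I(u^+)=m(a,\tau)=\inf_{A_{R_0}}I$ by Lemma~\ref{lemma3.5}), hence $\Psi'_{u^+}(1)=0$, so $1\in\{s_{u^+},t_{u^+}\}$; since $\Psi_{u^+}(1)=m(a,\tau)<0<\Psi_{u^+}(t_{u^+})$, necessarily $1=s_{u^+}$, and then $\Psi''_{u^+}(1)\ge0$ together with $\mathcal{P}_a^0=\emptyset$ (Lemma~\ref{lemma3.2}) gives $u^+\in\mathcal{P}_a^+$. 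This differs from the paper, which instead squeezes $I(u^+)=m(a,\tau)=\liminf I(u_n^+)$ to upgrade to norm convergence of the gradient and of $V$, and then passes to the limit in $P(u_n^+)=0$ and $\Psi''_{u_n^+}(1)\ge 0$; both routes are valid once the criticality of $t=1$ is made explicit.
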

\begin{proof}
Let $\{u_n^+\}\subset \mathcal{P}_a^+\subset A_{R_0}$ be a minimizing sequence for $M_a^+$ and $\{u_n^-\}\subset \mathcal{P}_a^-$ be a minimizing sequence for $M_a^-$, which, combining  \eqref{3.8} and \eqref{3.10}, implies that
$$ I(u_n^+)\rightarrow M_a^+= m(a,\tau) \text{ and } I(u_n^-)\rightarrow M_a^-,\text{  as }n\rightarrow\infty.$$
Consider the Schwarz rearrangement of $|u_n^\pm|$. After passing to a subsequence, we obtain new minimizing sequences $\{u_n^\pm\}$ that are nonnegative, radially symmetric, and non-increasing in $r=|x|$. Since $u_n^\pm\in\mathcal{P}_a^\pm$, \eqref{2.4} implies that
\begin{align*}
I(u_n^\pm)&=I(u_n^\pm)-\frac{2}{N(p-2)}P(u_n^\pm)\\
&=\left(\frac{1}{2}-\frac{2}{N(p-2)}\right)|\bi u_n^\pm|_2^2+\left(1-\frac{2(N+2)}{N(p-2)}\right)V(u_n^\pm)-\frac{\tau(p-q)}{q(p-2)}|u_n^\pm|_q^q\\
&\geq \left(\frac{1}{2}-\frac{2}{N(p-2)}\right)|\bi u_n^\pm|_2^2+\left(1-\frac{2(N+2)}{N(p-2)}\right)V(u_n^\pm)\\
&~~\quad-\frac{\tau(p-q)C_{N,q}}{q(p-2)}a^{\frac{4N-q(N-2)}{2(N+2)}}(V(u_n^\pm))^{\frac{q\gamma_q}{N+2}},
\end{align*}
where $2<q<2+\frac{4}{N}<4+\frac{4}{N}<p<2\cdot2^*$.
So $\{u_n^\pm\}$ is bounded in $X$.
By Proposition 1.7.1 in \cite{CT} and Theorem A.I in \cite{BH}, we find a subsequence (still denoted by  $\{u_n^\pm\}$) such that $u_n^\pm \rightharpoonup u^\pm$ weakly in $H^1(\R^N)$, $u_n^\pm\rightarrow u^\pm$ strongly in $L^q(\R^N)$ with $q\in(2,2\cdot2^*)$ and $u_n^\pm\rightarrow u^\pm$ a.e. in $\R^N$.
Since $V(u^\pm) = \frac{1}{4}|\nabla (u^\pm)^2|_2^2$, we also get $(u_n^\pm)^2 \rightharpoonup (u^\pm)^2$ in $D_0^{1,2}(\R^N)$.
Finally, weak lower semi-continuity yields
\begin{equation}\label{3.16}
\begin{aligned}
\left\{
  \begin{array}{ll}
    |\bi u^\pm|_2^2\leq \liminf\limits_{n\rightarrow\infty}|\bi u_n^\pm|_2^2, \\
    V(u^\pm)\leq \liminf\limits_{n\rightarrow\infty}V(u_n^\pm), \\
    |u^\pm|_2^2\leq \liminf\limits_{n\rightarrow\infty}| u_n^\pm|_2^2, \\
    |u^\pm|_p^p  =  \liminf\limits_{n\rightarrow\infty}| u_n^\pm|_p^p, \\
    |u^\pm|_q^q  =  \liminf\limits_{n\rightarrow\infty}| u_n^\pm|_q^q.
  \end{array}
\right.
\end{aligned}
\end{equation}
We first claim that $u^\pm\neq 0$. Indeed, if it is false, by $|u_n^\pm|_p^p=|u_n^\pm|_q^q=o_n(1)$ and $P(u_n^\pm)=0$, we obtain that
$$ |\bi u_n^\pm|_2^2+(N+2)V(u_n^\pm)=o_n(1),$$
which implies that
\begin{equation}\label{3.17}
|\bi u_n^\pm|_2^2=o_n(1)\text{ and } V(u_n^\pm)=o_n(1).
\end{equation}
So $M_a^\pm =\lim\limits_{n\to +\infty} I(u_n^\pm)=0$,
which contradicts to $M_a^\pm \neq 0$.
 Therefore, $u^\pm\neq 0$.

Next, we show that  $u^+ \in A_{R_0}$. Since $\{u_n^+\} \subset A_{R_0}$, \eqref{3.16} implies that
$$
|u^+|_2^2 \leq \liminf_{n \to \infty} |u_n^+|_2^2 \leq a
$$
and
$$
V(u^+) \leq \liminf_{n \to \infty} V(u_n^+) \leq R_0.
$$
Thus, to establish that  $u^+ \in A_{R_0}$, it suffices to show that  $V(u^+) \neq R_0$. Assume that $V(u^+) = R_0$. Then \eqref{3.1} yields
\begin{equation}\label{3.19}
\begin{aligned}
I(u^+) &= \frac{1}{2}|\nabla u^+|_2^2 + V(u^+) - \frac{1}{p}|u^+|_p^p - \frac{\tau}{q}|u^+|_q^q \\
&= \frac{1}{2}|\nabla u^+|_2^2 + g(R_0) \\
&= \frac{1}{2}|\nabla u^+|_2^2 \\
&\geq 0,
\end{aligned}
\end{equation}
and \eqref{3.16} implies that
\begin{equation}\label{3.20}
\begin{aligned}
I(u^+)&=\frac{1}{2}|\bi u^+|_2^2+V(u^+)-\frac{1}{p}|u^+|_p^p-\frac{\tau}{q}|u^+|_q^q\\
&\leq \liminf_{n\rightarrow\infty}(\frac{1}{2}|\bi u_n^+|_2^2+V(u_n^+))-\liminf_{n\rightarrow\infty}(\frac{1}{p}|u_n^+|_p^p+\frac{\tau}{q}|u_n^+|_q^q)\\
&=\liminf_{n\rightarrow\infty}I(u_n^+)\\
&=m(a,\tau)<0,
\end{aligned}
\end{equation}
a contradiction. Thus $u^+\in A_{R_0}$. From \eqref{3.16}, \eqref{3.20}  and $u^+\in A_{R_0}$, we obtain
$$
m(a,\tau) \leq I(u^+) \leq \liminf_{n\to\infty} I(u_n^+) = m(a,\tau),
$$
which yields
\begin{equation}\label{3.21}
\lim_{n\to\infty}|\nabla u_n^+|_2^2 = |\nabla u^+|_2^2 ~~ \text{and}~~\lim_{n\to\infty} V(u_n^+) = V(u^+)
\end{equation}
and
\begin{equation}\label{3.22}
I(u^+) = m(a,\tau).
\end{equation}
Combining \eqref{3.8}, \eqref{3.21}, \eqref{3.22} and $u_n^+\in \mathcal{P}_a^+$, we conclude
$$
u^+ \in \mathcal{P}_a^+ ~~ \text{and}~~ I(u^+) = M_a^+,
$$
which implies  that $M_a^+$ is attained by $u^+$.

For the case of $M_a^-$, since $u^- \neq 0$ and $u^- \in D_a\setminus\{0\}$, it follows from Lemma \ref{lemma3.3} that $t_{u^-} > 0$ with $t_{u^-}\star u^- \in \mathcal{P}_a^-$. Using \eqref{1.8}, \eqref{3.16}  and $u_n^- \in \mathcal{P}_a^-$, we have
$$
M_a^- \leq I(t_{u^-}\star u^-) \leq \liminf_{n\to\infty} I(t_{u^-}\star u_n^-) \leq \liminf_{n\to\infty} I(u_n^-) = M_a^-,
$$
which implies that
$$
\lim_{n\to\infty}|\nabla u_n^-|_2^2 = |\nabla u^-|_2^2 ~~ \text{and} ~~ \lim_{n\to\infty} V(u_n^-) = V(u^-),
$$
with $I(u^-) = M_a^-$. Therefore, $u^- \in \mathcal{P}_a^-$ and $M_a^-$ is attained by $u^-$.

We complete  the proof.
\end{proof}

\begin{lemma}\label{lemma3.80}
Let $0 \neq u \in D_a$ satisfy $I(u) < m(a, \tau)$. Then $t_u < 1$.
\end{lemma}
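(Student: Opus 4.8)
The plan is to read off the conclusion directly from the precise shape of the fiber map $\Psi_u(t)=I(t\star u)$ established in Lemma~\ref{lemma3.3}, combined with the identification $m(a,\tau)=M_a^+$ from Lemma~\ref{lemma3.5}. Recall that for $0\neq u\in D_a$ the map $\Psi_u$ is strictly decreasing on $(0,s_u)$ and strictly increasing on $(s_u,t_u)$, with $s_u$ a local minimum at negative level and $t_u$ the global maximum at positive level. In particular $s_u$ is the global minimizer of $\Psi_u$ on the whole interval $(0,t_u]$, so that $\Psi_u(t)\ge\Psi_u(s_u)$ for every $t\in(0,t_u]$. This monotonicity information is the geometric backbone of the proof.

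First I would record the lower bound $\Psi_u(s_u)\ge m(a,\tau)$. Indeed, Lemma~\ref{lemma3.3} gives $s_u\star u\in\mathcal{P}_a^+\subset A_{R_0}$, while Lemma~\ref{lemma3.5} identifies $m(a,\tau)=M_a^+=\inf_{v\in\mathcal{P}_a^+}I(v)$; hence $\Psi_u(s_u)=I(s_u\star u)\ge M_a^+=m(a,\tau)$. Equivalently, one may invoke the characterization $I(s_u\star u)=\min\{I(t\star u)\mid t\in\R^+,\ V(t\star u)<R_0\}$ from Lemma~\ref{lemma3.3}(i) together with $m(a,\tau)=\inf_{A_{R_0}}I$.

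Then I would argue by contradiction, assuming $t_u\ge 1$ and splitting into two cases. If $t_u>1$, then $1\in(0,t_u)$, so the minimality of $s_u$ on $(0,t_u]$ yields $\Psi_u(1)\ge\Psi_u(s_u)\ge m(a,\tau)$, contradicting the hypothesis $I(u)=\Psi_u(1)<m(a,\tau)$. If instead $t_u=1$, then $\Psi_u(1)=\Psi_u(t_u)>0$, whereas $m(a,\tau)<0$ by Lemma~\ref{lemma3.5}, so again $\Psi_u(1)<m(a,\tau)$ fails. Both cases being impossible, we conclude $t_u<1$.

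I do not expect a genuine analytic obstacle here: the statement is essentially a direct consequence of the concave–convex fiber-map geometry already developed. The only point demanding a little care is ruling out the borderline case $t_u=1$, which is precisely where the sign information $\Psi_u(t_u)>0>m(a,\tau)$ is needed to upgrade the naive bound $t_u\le 1$ to the strict inequality $t_u<1$.
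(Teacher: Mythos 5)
Your proof is correct, and it rests on the same underlying squeeze as the paper's --- deriving the contradiction $m(a,\tau) > I(u) \geq I(s_u\star u) \geq m(a,\tau)$ --- but the implementation is genuinely different and a little cleaner. The paper argues on the zero points: it supposes $d_u > 1$, first uses $\Psi_u(1) = I(u) < m(a,\tau) < 0$ to force $c_u > 1$, and then invokes Lemma \ref{lemma3.3}(i) together with the characterization $m(a,\tau) = \inf_{A_{R_0}} I$ to bound $\Psi_u(1) \geq \min\{ I(t\star u) \mid t>0,\ V(t\star u) < R_0\} = I(s_u\star u) \geq m(a,\tau)$. You instead contradict $t_u \geq 1$ directly: when $t_u > 1$, the minimality of $s_u$ on $(0,t_u]$ gives $\Psi_u(1) \geq \Psi_u(s_u) = I(s_u\star u) \geq M_a^+ = m(a,\tau)$, using the identification \eqref{3.8} rather than the $A_{R_0}$-infimum; the borderline case $t_u = 1$ is killed by $\Psi_u(t_u) > 0 > m(a,\tau)$. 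Your route avoids introducing $c_u, d_u$ at all, at the price of needing the strict monotonicity of $\Psi_u$ on $(0,s_u)$ and $(s_u,t_u)$; this is not stated verbatim in Lemma \ref{lemma3.3}, but it does follow from the ``exactly two critical points'' conclusion combined with $\Psi_u(0^+)=0^-$, $\Psi_u(s_u)<0<\Psi_u(t_u)$ and $\Psi_u(+\infty)=-\infty$, and the paper itself asserts exactly this monotonicity inside the proof of Lemma \ref{lemma3.5}, so your appeal to it is legitimate. Both arguments are sound; the paper's needs only Lemma \ref{lemma3.3}(i) and the definition of $m(a,\tau)$, while yours leans on Lemma \ref{lemma3.5} but dispenses with the zero-point bookkeeping.
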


\begin{proof}
Taking  $0 \neq u \in D_a$ with $I(u) < m(a, \tau)$, it follows from Lemma \ref{lemma3.3} that  there exist $s_u < c_u < t_u < d_u$ satisfies the properties of Lemma \ref{lemma3.3}. If  we can  show $d_u \leq 1$, then  $t_u < 1$. Suppose, by contradiction, that $d_u > 1$. Since $\Psi_u(1) = I(u) < m(a, \tau) < 0$, we have that  $c_u > 1$(In fact, if $c_u<1$, then, according to the fact that $ \Psi_u(t)>0$ in $(c_u, d_u)$, we can see that $ \Psi_u(1)>0$, contradicting to $ \Psi_u(1)=I(u)<0$; if $c_u=1$, then $ \Psi_u(1)=0$, which contradicts to $ \Psi_u(1)=I(u)<0$). So, following from Lemma \ref{lemma3.3}, we can have that
\begin{align*}
m(a,\tau)
&> I(u) = \Psi_u(1) \\
&\geq \min_{t \in (0,c_u)} \Psi_u(t) \\
&\geq \min \{ I(t \star u) \mid t > 0,  V(t \star u) < R_0 \} \\
&= I(s_u \star u) \\
&\geq m(a,\tau),
\end{align*}
a contradiction.
\end{proof}

By Lemma~\ref{lemma3.7}, there exists $u^+ \in \mathcal{P}_a^+$ such that $I(u^+) = m(a, \tau) < 0$. For large $t > 0$, it follows that $I(t \star u^+) < 2m(a, \tau)$. Hence, the set
$$
\Gamma := \left\{ \gamma \in \mathcal{C}\big([0, 1], D_a \setminus \{0\}\big) : \gamma(0) \in \mathcal{P}_a^+,\  I(\gamma(1)) \leq 2m(a, \tau) \right\}
$$
is nonempty. We define the minimax value
$$
m^* := \inf_{\gamma \in \Gamma} \sup_{\theta \in [0, 1]} I(\gamma(\theta)).
$$

\begin{lemma}\label{lemma3.90}
It holds that $m^* = \inf\limits_{u \in \mathcal{P}_a^-} I(u) > 0$.
\end{lemma}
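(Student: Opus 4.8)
The plan is to prove the two inequalities $m^* \le M_a^-$ and $m^* \ge M_a^-$ separately, where I write $M_a^- := \inf_{u\in\mathcal{P}_a^-} I(u)$; the strict positivity $M_a^- > 0$ is already recorded in \eqref{3.10} of Lemma~\ref{lemma3.5}, so once the identity $m^* = M_a^-$ is established the full claim follows. Throughout I use that the scaling $t\star u$ preserves the $L^2$-norm, so that $t\star u \in D_a\setminus\{0\}$ for every $t>0$ whenever $u\in D_a\setminus\{0\}$, and that by Lemma~\ref{lemma3.3} the fiber map $\Psi_u$ attains its unique global maximum at $t_u$ with $t_u\star u\in\mathcal{P}_a^-$, while its local minimum $s_u$ satisfies $s_u\star u\in\mathcal{P}_a^+$ and $s_u<t_u$.

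For the upper bound $m^* \le M_a^-$, I would fix $u\in\mathcal{P}_a^-$ and construct a competitor path inside the fiber $\{t\star u : t>0\}$. Since $u\in\mathcal{P}_a^-$ we have $t_u=1$, so $I(u)=\Psi_u(1)=\max_{t>0}\Psi_u(t)$. Because $p>4+\frac{4}{N}$ forces $\Psi_u(t)\to-\infty$ as $t\to+\infty$, I can pick $T>1$ with $I(T\star u)\le 2m(a,\tau)$ and set $\gamma(\theta):=\big((1-\theta)s_u+\theta T\big)\star u$ for $\theta\in[0,1]$. This $\gamma$ lies in $\mathcal{C}\big([0,1],D_a\setminus\{0\}\big)$, starts at $s_u\star u\in\mathcal{P}_a^+$, and ends with $I(\gamma(1))\le 2m(a,\tau)$, so $\gamma\in\Gamma$. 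Along $\gamma$ the value of $I$ is just $\Psi_u$ evaluated on the interval $[s_u,T]\ni 1=t_u$, whose maximum equals the global maximum $\Psi_u(1)=I(u)$; hence $\sup_{\theta}I(\gamma(\theta))=I(u)$ and therefore $m^*\le I(u)$. Taking the infimum over $u\in\mathcal{P}_a^-$ gives $m^*\le M_a^-$.

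For the lower bound $m^* \ge M_a^-$, I would show that every $\gamma\in\Gamma$ meets $\mathcal{P}_a^-$. Consider the real-valued function $\theta\mapsto t_{\gamma(\theta)}$, which is continuous by the $\mathcal{C}^1$-regularity of $u\mapsto t_u$ from Lemma~\ref{lemma3.3}(iii) composed with the continuity of $\gamma$. At the left endpoint $\gamma(0)\in\mathcal{P}_a^+$ gives $s_{\gamma(0)}=1<t_{\gamma(0)}$, so $t_{\gamma(0)}>1$; at the right endpoint $I(\gamma(1))\le 2m(a,\tau)<m(a,\tau)$ (recall $m(a,\tau)<0$), so Lemma~\ref{lemma3.80} yields $t_{\gamma(1)}<1$. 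By the intermediate value theorem there is $\theta^*\in(0,1)$ with $t_{\gamma(\theta^*)}=1$, which means $\gamma(\theta^*)=1\star\gamma(\theta^*)=t_{\gamma(\theta^*)}\star\gamma(\theta^*)\in\mathcal{P}_a^-$. Consequently $\sup_{\theta}I(\gamma(\theta))\ge I(\gamma(\theta^*))\ge M_a^-$, and taking the infimum over $\Gamma$ gives $m^*\ge M_a^-$.

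I expect the main obstacle to be the lower bound, specifically making the intermediate-value crossing rigorous: one must verify that $\theta\mapsto t_{\gamma(\theta)}$ is genuinely continuous (resting on Lemma~\ref{lemma3.3}(iii) and on $\gamma$ never hitting $0$, guaranteed by $\gamma$ mapping into $D_a\setminus\{0\}$), and that the two strict endpoint inequalities $t_{\gamma(0)}>1$ and $t_{\gamma(1)}<1$ hold. The latter is exactly where the sharpened terminal level $2m(a,\tau)$ in the definition of $\Gamma$ is used, together with Lemma~\ref{lemma3.80}. The upper bound, by contrast, is a routine fiber-path construction requiring only that $t_u=1$ on $\mathcal{P}_a^-$ and that $\Psi_u(t)\to-\infty$.
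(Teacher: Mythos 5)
Your proposal is correct and follows essentially the same argument as the paper's proof: the same fiber path $\gamma(\theta)=\big((1-\theta)s_u+\theta T\big)\star u$ for the inequality $m^*\leq\inf_{u\in\mathcal{P}_a^-}I(u)$, and the same intermediate-value crossing of $\theta\mapsto t_{\gamma(\theta)}$ (with endpoints controlled by $\gamma(0)\in\mathcal{P}_a^+$ and Lemma~\ref{lemma3.80}) for the reverse inequality. The only difference is the order in which the two inequalities are established, which is immaterial.
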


\begin{proof}
From \eqref{3.10}, we have that  $\inf\limits_{u \in \mathcal{P}_a^-} I(u) > 0$. For any $\gamma \in \Gamma$, we have $t_{\gamma(0)} > s_{\gamma(0)} = 1$. By Lemma \ref{lemma3.80}, $t_{\gamma(1)} < 1$. Thus it follows from Lemma \ref{lemma3.3} that there is a  $\theta_0 = \theta_0(\gamma) \in (0, 1)$ such that  $t_{\gamma(\theta_0)} = 1$. So
$$
\gamma(\theta_0) \in \mathcal{P}_a^-~~ \text{and}~~\sup_{\theta \in [0,1]} I(\gamma(\theta)) \geq I(\gamma(\theta_0)) \geq \inf_{u \in \mathcal{P}_a^-} I(u).
$$
Hence $m^* \geq \inf\limits_{u \in \mathcal{P}_a^-} I(u)$.

For any  $u \in \mathcal{P}_a^-$, we have that $s_u < t_u = 1$. Choose $\theta_1 > 1$ such that $I(\theta_1 \star u) < 2m(a, \tau)$. Define $\gamma: [0,1] \to D_a \setminus \{0\}$ by
$$
\gamma(\theta) = \big( \theta \theta_1 + (1-\theta) s_u \big) \star u.
$$
Then $\gamma \in \Gamma$ and
$$
I(u) = \sup_{\theta \in [0,1]} I(\gamma(\theta)) \geq m^*.
$$
Thus $\inf\limits_{u \in \mathcal{P}_a^-} I(u) \geq m^*$.

Therefore we conclude $m^* = \inf\limits_{u \in \mathcal{P}_a^-} I(u) > 0$.
\end{proof}

\begin{lemma}\label{lemma3.100}
If $a, \tau > 0$ satisfy  \eqref{1.12}, then
$$
m^* = \inf_{u \in D_a \setminus \{0\}} \max_{t > 0} I(t \star u).
$$
\end{lemma}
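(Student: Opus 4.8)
The plan is to derive the identity from the already-established formula \eqref{3.10} together with Lemma \ref{lemma3.90}, the bridge being the observation that for each fixed $u \in D_a \setminus \{0\}$ the fiber map $\Psi_u(t) = I(t \star u)$ attains its global maximum over $(0, +\infty)$ precisely at the point $t_u$ furnished by Lemma \ref{lemma3.3}.

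First I would fix $u \in D_a \setminus \{0\}$ and recall from Lemma \ref{lemma3.3} that $\Psi_u$ is strictly decreasing on $(0, s_u]$, strictly increasing on $(s_u, t_u]$, and, having only the two critical points $s_u < t_u$, strictly decreasing on $(t_u, +\infty)$; in particular $t_u$ is the global maximum point, so that
$$ \max_{t > 0} I(t \star u) = I(t_u \star u) = \max_{s_u < t \leq t_u} I(t \star u). $$
Taking the infimum over $u \in D_a \setminus \{0\}$ and invoking \eqref{3.10} then gives
$$ \inf_{u \in D_a \setminus \{0\}} \max_{t > 0} I(t \star u) = \inf_{u \in D_a \setminus \{0\}} \max_{s_u < t \leq t_u} I(t \star u) = M_a^- = \inf_{u \in \mathcal{P}_a^-} I(u), $$
and by Lemma \ref{lemma3.90} the last quantity equals $m^*$, which is exactly the claim.

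To keep the argument self-contained one could instead establish the two inequalities directly. For the direction $\geq$, each $u \in D_a \setminus \{0\}$ yields $t_u \star u \in \mathcal{P}_a^-$ with $\max_{t > 0} I(t \star u) = I(t_u \star u) \geq \inf_{v \in \mathcal{P}_a^-} I(v) = m^*$, and taking the infimum over $u$ preserves this bound. For the direction $\leq$, any $u \in \mathcal{P}_a^-$ satisfies $t_u = 1$ by the uniqueness of critical points in Lemma \ref{lemma3.3}, so $\max_{t > 0} I(t \star u) = I(u)$; since $\mathcal{P}_a^- \subset D_a \setminus \{0\}$, the infimum over the larger set is at most $\inf_{u \in \mathcal{P}_a^-} I(u) = m^*$.

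I do not anticipate a genuine obstacle here: the entire content resides in the monotonicity and uniqueness structure of the fiber map already isolated in Lemma \ref{lemma3.3}. The single point deserving a moment of care is the passage from the localized maximum $\max_{s_u < t \leq t_u}$ appearing in \eqref{3.10} to the full maximum $\max_{t > 0}$; this is harmless precisely because the global maximum of $\Psi_u$ over $(0, +\infty)$ is attained at $t_u$, as recorded in Lemma \ref{lemma3.3}(ii).
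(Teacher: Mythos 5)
Your proposal is correct and takes essentially the same route as the paper: the ``self-contained'' two-inequality argument you sketch (using $t_u \star u \in \mathcal{P}_a^-$ with $\max_{t>0} I(t\star u) = I(t_u\star u)$ for one direction, and $t_u = 1$ for $u \in \mathcal{P}_a^-$ for the other, combined with Lemma \ref{lemma3.90}) is exactly the paper's proof. Your first route through \eqref{3.10} is only a cosmetic repackaging of the same facts from Lemma \ref{lemma3.3}, so there is nothing genuinely different to compare.
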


\begin{proof}
By Lemma \ref{lemma3.3}, each $u \in D_a \setminus \{0\}$ has a unique $t_u > 0$ maximizing $\Psi_u(t)$ with $t_u \star u \in \mathcal{P}_a^-$, which implies that
$$
\max_{t > 0} I(t \star u) = I(t_u \star u) \geq \inf_{v \in \mathcal{P}_a^-} I(v).
$$
Thus
$$
\inf_{u \in D_a \setminus \{0\}} \max_{t > 0} I(t\star u) \geq \inf_{u \in \mathcal{P}_a^-} I(u)=m^*.
$$

For any $u \in \mathcal{P}_a^-$, we have that $t_u = 1$ and
$$
I(u) = \max_{t > 0} I(t \star u) \geq \inf_{v \in D_a \setminus \{0\}} \max_{t > 0} I(t \star v),
$$
which implies that
\begin{equation}\label{3.2300}
m^*=\inf_{u \in \mathcal{P}_a^-} I(u) \geq \inf_{u \in D_a \setminus \{0\}} \max_{t > 0} I(t \star u).
\end{equation}

Therefore, we have that
$$
m^* = \inf_{u \in D_a \setminus \{0\}} \max_{t > 0} I(t \star u).
$$
\end{proof}

\begin{lemma}\label{lemma3.800}
Assume that  $N \geq 1$, $2 < q < 2 + \frac{4}{N} < 4 + \frac{4}{N} < p < 2 \cdot 2^*$, and   $a, \tau>0 $  satisfy the inequality \eqref{1.12}. If $u^+ \in A_{R_0}$ and $I(u^+) = m(a,\tau)$, then $u^+$ is a weak solution of  \eqref{1.1}, i.e,  there exists $\lambda^+ \in \mathbb{R}$ such that
$$ \langle I'(u^+), \phi\rangle + \lambda^+ \int_{\mathbb{R}^N} u^+ \phi  dx = 0, \quad \forall \phi \in \mathcal{C}_0^\infty(\mathbb{R}^N). $$
\end{lemma}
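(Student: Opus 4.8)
The plan is to convert the minimizing property of $u^+$ into the Euler--Lagrange equation by testing $I$ along admissible one-parameter deformations, treating the mass bound as the only active constraint. The point to exploit is that $u^+\in A_{R_0}$ realizes $m(a,\tau)=\inf_{A_{R_0}}I$ while satisfying $V(u^+)<R_0$ \emph{strictly}; hence the constraint $V<R_0$ is inactive at $u^+$, and $u^+$ is a local minimizer of $I$ subject only to $|u|_2^2\le a$. The non-differentiability of $I$ stemming from the quasi-linear term $V$ will be avoided by deforming along a smooth test function together with a scalar rescaling, so that the resulting scalar map is genuinely $\mathcal{C}^1$.

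Fix $\phi\in\mathcal{C}_0^\infty(\R^N)$. First I would construct the admissible path. Applying the implicit function theorem to $\Phi(\ep,\mu):=\mu^2|u^++\ep\phi|_2^2-|u^+|_2^2$, which satisfies $\Phi(0,1)=0$ and $\partial_\mu\Phi(0,1)=2|u^+|_2^2\neq0$, produces a $\mathcal{C}^1$ map $\ep\mapsto\mu(\ep)$ with $\mu(0)=1$, $\Phi(\ep,\mu(\ep))\equiv0$ and $\mu'(0)=-|u^+|_2^{-2}\int_{\R^N}u^+\phi\,dx$. Setting $w(\ep):=\mu(\ep)(u^++\ep\phi)$ gives a path with $w(0)=u^+$ and $|w(\ep)|_2^2=|u^+|_2^2\le a$. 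I would then verify $w(\ep)\in A_{R_0}$ for small $|\ep|$: indeed $w(\ep)\neq0$ since $u^++\ep\phi\to u^+\neq0$, the mass bound holds by construction, and $V(w(\ep))=\mu(\ep)^4V(u^++\ep\phi)\to V(u^+)<R_0$ along this smooth path, so $V(w(\ep))<R_0$ for $|\ep|$ small. The strict inequality $V(u^+)<R_0$ is precisely what allows perturbations of both signs.

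Next I would differentiate at $\ep=0$. Since $\phi$ is smooth and $\mu(\ep)$ is a scalar, every term of $I(w(\ep))$ is an honest smooth function of $\ep$ (for instance $\mu(\ep)^4V(u^++\ep\phi)$ and $p^{-1}\mu(\ep)^p|u^++\ep\phi|_p^p$), so $\ep\mapsto I(w(\ep))$ is $\mathcal{C}^1$ with no recourse to Fr\'echet differentiability of $I$. As $w(\ep)\in A_{R_0}$ forces $I(w(\ep))\ge m(a,\tau)=I(w(0))$, the value $\ep=0$ is an interior minimum, so $\frac{d}{d\ep}I(w(\ep))\big|_{\ep=0}=0$. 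Splitting the chain rule into the variation in the $\phi$-direction and the variation in the scaling $\mu$ gives
\begin{equation*}
\langle I'(u^+),\phi\rangle+\mu'(0)\langle I'(u^+),u^+\rangle=0,\qquad \langle I'(u^+),u^+\rangle=|\nabla u^+|_2^2+4V(u^+)-|u^+|_p^p-\tau|u^+|_q^q,
\end{equation*}
the second quantity being the finite directional derivative $\tfrac{d}{ds}I(su^+)|_{s=1}$. Inserting $\mu'(0)$ and putting $\lambda^+:=-\langle I'(u^+),u^+\rangle/|u^+|_2^2$ turns this into $\langle I'(u^+),\phi\rangle+\lambda^+\int_{\R^N}u^+\phi\,dx=0$; since $\phi$ was arbitrary, $u^+$ is a weak solution of \eqref{1.1}.

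The main obstacle is the passage from \emph{minimizer} to \emph{weak solution} without a $\mathcal{C}^1$ structure on $I$. Two devices resolve it: restricting to smooth $\phi$ and absorbing the mass change into the scalar factor $\mu(\ep)$, which reduces the problem to one-variable calculus and renders $\langle I'(u^+),\cdot\rangle$ meaningful; and the strictness $V(u^+)<R_0$, which keeps the deformation inside the \emph{open} set $A_{R_0}$ for both signs of $\ep$ and thus yields a vanishing derivative rather than a one-sided inequality. Finally, I would emphasize that this step does not assume $|u^+|_2^2=a$: the identification of the sign of $\lambda^+$ and the membership $u^+\in\mathcal{S}_a$ are postponed to the later non-existence argument for \eqref{6.30.1}.
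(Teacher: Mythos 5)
Your proof is correct, and it takes a genuinely different route in its implementation, though it shares the paper's underlying mechanism. The paper's proof splits into two cases according to whether the mass constraint is saturated: when $|u^+|_2^2 = a$ it fixes an auxiliary $\omega \in \mathcal{C}_0^\infty(\mathbb{R}^N)$ with $\int_{\mathbb{R}^N} u^+\omega\,dx \neq 0$ and restores the constraint by the additive correction $u^+ + \eta\phi + \psi(\eta)\omega$, with $\psi$ given by the implicit function theorem; when $|u^+|_2^2 < a$ it perturbs freely and concludes with $\lambda^+ = 0$. You instead freeze the mass at its initial value (whether or not it equals $a$) via the multiplicative normalization $w(\varepsilon) = \mu(\varepsilon)(u^+ + \varepsilon\phi)$. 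This removes the case distinction entirely, replaces the auxiliary direction $\omega$ by the scaling direction $u^+$ itself (the needed nondegeneracy is automatic, since $\int_{\mathbb{R}^N} u^+\cdot u^+\,dx = |u^+|_2^2 > 0$), and produces the closed formula $\lambda^+ = -\langle I'(u^+), u^+\rangle/|u^+|_2^2$, which is precisely the identity the paper only recovers later, in Lemma~\ref{lemma3.9}, from the weak equation. Both arguments rest on the same two pillars you identify: the strict inequality $V(u^+) < R_0$, which makes that constraint inactive so that perturbations of both signs stay in $A_{R_0}$ and yield a genuine vanishing derivative rather than a one-sided inequality; and the reduction to finite-dimensional calculus to bypass the non-differentiability of $I$ on $X$. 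The one point you should make fully explicit is the joint $\mathcal{C}^1$ regularity of $(\varepsilon,\mu)\mapsto I\bigl(\mu(u^+ + \varepsilon\phi)\bigr)$ needed for your chain-rule splitting: it holds because $|\nabla(u^+ +\varepsilon\phi)|_2^2$ and $V(u^+ + \varepsilon\phi)$ are polynomials in $\varepsilon$ whose coefficients are finite (by Cauchy--Schwarz, using $V(u^+)<\infty$, $u^+\in H^1(\mathbb{R}^N)$ and the boundedness of $\phi$ and $\nabla\phi$ on $\mathrm{supp}\,\phi$), while $|u^+ + \varepsilon\phi|_p^p$ and $|u^+ + \varepsilon\phi|_q^q$ are $\mathcal{C}^1$ in $\varepsilon$ by dominated convergence. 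With that remark added, your argument is complete and somewhat shorter than the paper's.
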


\begin{proof}
Firstly, fix an arbitrary function $\phi \in \mathcal{C}_0^\infty(\mathbb{R}^N)$. By Lemma \ref{lemma3.7}, we have that  $|u^+|_2^2 \leq a$ and $u^+ \neq 0$. Thus, we may select a function $\omega \in \mathcal{C}_0^\infty(\mathbb{R}^N)$ satisfying
\begin{equation}\label{3.230}
\int_{\mathbb{R}^N} u^+ \omega  dx \neq 0.
\end{equation}

The proof is divided into two cases.

\textbf{Case 1:} Suppose that  $u^+ \in \mathcal{A} := \{ u \mid |u|_2^2 = a, V(u) < R_0 \}$. Introduce $J(u) := |u|_2^2 - a$ and define the function
\begin{equation}\label{3.240}
\begin{aligned}
g(\eta,\sigma) &:= J(u^+ + \eta \phi + \sigma \omega) \\
&= \int_{\mathbb{R}^N} |u^+ + \eta \phi + \sigma \omega|^2  dx - a, \quad (\eta, \sigma \in \mathbb{R}).
\end{aligned}
\end{equation}
Observe that
\begin{equation}\label{3.250}
g(0,0) = \int_{\mathbb{R}^N} |u^+|^2  dx - a = 0.
\end{equation}
Since $g$ is $\mathcal{C}^1$, we compute the partial derivatives:
\begin{equation}\label{3.260}
\frac{\partial g}{\partial \eta}(\eta,\sigma) = 2 \int_{\mathbb{R}^N} (u^+ + \eta \phi + \sigma \omega) \phi  dx,
\end{equation}
\begin{equation}\label{3.270}
\frac{\partial g}{\partial \sigma}(\eta,\sigma) = 2 \int_{\mathbb{R}^N} (u^+ + \eta \phi + \sigma \omega) \omega  dx.
\end{equation}
From \eqref{3.230}, we deduce that
\begin{equation}\label{3.280}
\frac{\partial g}{\partial \sigma}(0,0) \neq 0.
\end{equation}
By the implicit function theorem, there exists a $\mathcal{C}^1$ function $\psi : \mathbb{R} \to \mathbb{R}$ such that
\begin{equation}\label{3.290}
\psi(0) = 0
\end{equation}
and
\begin{equation}\label{3.300}
g(\eta, \psi(\eta)) = 0, ~~\eta\in [-\eta_0, \eta_0]
\end{equation}
for a sufficiently small positive constant $\eta_0$. Differentiating with respect to $\eta$ implies
$$
\frac{\partial g}{\partial \eta}(\eta, \psi(\eta)) + \frac{\partial g}{\partial \sigma}(\eta, \psi(\eta)) \psi'(\eta) = 0,
$$
which, combining  \eqref{3.260} and \eqref{3.270}, implies that
\begin{equation}\label{3.310}
\psi'(0) = - \frac{\int_{\mathbb{R}^N} u^+ \phi  dx}{\int_{\mathbb{R}^N} u^+ \omega  dx}.
\end{equation}

Letting
\begin{equation}\label{3.321}
h(\eta) := \eta \phi + \psi(\eta) \omega, \quad (|\eta| \leq \eta_0)
\end{equation}
and setting
$$
i(\eta) := I(u^+ + h(\eta)),
$$
it follows from  \eqref{3.300} and \eqref{3.321}  that $J(u^+ + h(\eta)) = 0$ and $V(u^+ + h(\eta)) < R_0$. Thus, $u^+ + h(\eta)\in \mathcal{A} \subset A_{R_0}$, which tells us that
$$
i(\eta) = I(u^+ + h(\eta)) \geq \inf_{u \in A_{R_0}} I(u) = I(u^+) = i(0).
$$
So we can see that  the $\mathcal{C}^1$ function $i(\eta)$ attains its minimum at $\eta = 0$. Then
\begin{equation}\label{3.320}
\begin{aligned}
0 &= i'(0) \\
&= \int_{\mathbb{R}^N} \left[ \bi u^+ (\bi \phi + \psi'(0) \bi \omega) + \bi (u^+)^2 (\bi (u^+ \phi) + \psi'(0) \bi (u^+ \omega)) \right] dx \\
&\quad - \int_{\mathbb{R}^N} \left[ |u^+|^{p-1} \phi + \psi'(0) |u^+|^{p-1} \omega + \tau |u^+|^{q-1} \phi + \tau \psi'(0) |u^+|^{q-1} \omega \right] dx,
\end{aligned}
\end{equation}

Letting
$$
\lambda^+ = - \frac{\int_{\mathbb{R}^N} \left[ \bi u^+ \bi \omega + \bi (u^+)^2 \bi (u^+ \omega) - |u^+|^{p-1} \omega - \tau |u^+|^{q-1} \omega \right] dx}{\int_{\mathbb{R}^N} u^+ \omega  dx},
$$
it follows from  \eqref{3.310} that
$$
\langle I'(u^+),  \phi\rangle + \lambda^+ \int_{\mathbb{R}^N} u^+ \phi  dx = 0.
$$

\textbf{Case 2:} Suppose that  $u^+ \in \mathcal{B} := \{ u \mid |u|_2^2 < a, V(u) < R_0 \}$. Then there exists $\eta_0 > 0$ sufficiently small such that,  for all $\eta \in [-\eta_0, \eta_0]$,
$$
\int_{\mathbb{R}^N} |u^+ + \eta \phi|^2  dx < a ~~ \text{and} ~~ V(u^+ + \eta \phi) < R_0.
$$
Letting
$$
f(\eta) := I(u^+ + \eta \phi),
$$
then it is easy to see that $ u^+ + \eta \phi\in \mathcal{B} \subset A_{R_0}$ and
$$
f(\eta) \geq \inf_{u \in A_{R_0}} I(u) = I(u^+) = f(0).
$$
Thus $f$ has a minimum at $\eta = 0$. So
\begin{equation}\label{3.330}
0 = f'(0) = \int_{\mathbb{R}^N} \left[ \bi u^+ \bi \phi + \bi (u^+)^2 \bi (u^+ \phi) - |u^+|^{p-1} \phi - \tau |u^+|^{q-1} \phi \right] dx.
\end{equation}
Therefore, we have that
$$
\langle I'(u^+),  \phi \rangle+ \lambda^+ \int_{\mathbb{R}^N} u^+ \phi  dx = 0,
$$
where $\lambda^+ = 0$.

The proof is completed.
\end{proof}

\begin{lemma}\label{lemma3.8}
Let $N \geq 1$, $2 < q < 2 + \frac{4}{N} < 4 + \frac{4}{N} < p < 2 \cdot 2^*$, and suppose that  $a, \tau > 0$ satisfy \eqref{1.12}. If $u^- \in \mathcal{P}_a^-$ satisfies $I(u^-) = M_a^-$, then $u^-$ is a weak solution of \eqref{1.1}. Namely, there exists $\lambda^- \in \mathbb{R}$ such that
$$
\langle I'(u^-), \phi \rangle + \lambda^- \int_{\mathbb{R}^N} u^- \phi  dx = 0 ~~ \text{for all} ~~ \phi \in C_0^\infty(\mathbb{R}^N).
$$
\end{lemma}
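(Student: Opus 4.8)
The plan is to reuse the implicit‑function‑theorem scheme of Lemma~\ref{lemma3.800}, but with the energy $I$ replaced by the min–max functional $u\mapsto\max_{t>0}I(t\star u)$. The point is that $u^-$ is \emph{not} a local minimizer of $I$, so the direct comparison used for $u^+$ fails; however, by Lemma~\ref{lemma3.100} we have $M_a^-=\inf_{v\in D_a\setminus\{0\}}\max_{t>0}I(t\star v)$, so $u^-$ \emph{is} a minimizer of the min–max quantity, and moreover $u^-\in\mathcal{P}_a^-\subset D_a$ gives $t_{u^-}=1$ (Lemma~\ref{lemma3.3}). Fix $\phi\in\mathcal{C}_0^\infty(\R^N)$ and, exactly as in Lemma~\ref{lemma3.800}, distinguish the boundary case $|u^-|_2^2=a$ from the interior case $|u^-|_2^2<a$.

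In the boundary case I would first pick $\omega\in\mathcal{C}_0^\infty(\R^N)$ with $\int_{\R^N}u^-\omega\,dx\neq0$ and, applying the implicit function theorem to $g(\eta,\sigma)=|u^-+\eta\phi+\sigma\omega|_2^2-a$, obtain a $\mathcal{C}^1$ map $\psi$ with $\psi(0)=0$ such that $v(\eta):=u^-+\eta\phi+\psi(\eta)\omega$ satisfies $|v(\eta)|_2^2=a$ for $|\eta|\le\eta_0$, together with $\psi'(0)=-\big(\int_{\R^N}u^-\phi\,dx\big)\big/\big(\int_{\R^N}u^-\omega\,dx\big)$. Since then $v(\eta)\in\mathcal{S}_a\subset D_a\setminus\{0\}$ for small $\eta$, I set
$$\beta(\eta):=\max_{t>0}I(t\star v(\eta))=I\big(t_{v(\eta)}\star v(\eta)\big),$$
and note that $t_{v(\eta)}\star v(\eta)\in\mathcal{P}_a^-$ by Lemma~\ref{lemma3.3}. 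Lemma~\ref{lemma3.100} gives $\beta(\eta)\ge M_a^-$ for all $\eta$, while $\beta(0)=\max_{t>0}I(t\star u^-)=I(u^-)=M_a^-$ because $t_{u^-}=1$. Hence $\beta$ attains a minimum at $\eta=0$, so $\beta'(0)=0$.

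To extract the Euler–Lagrange identity from $\beta'(0)=0$ I would run an envelope computation. Writing $\tau(\eta):=t_{v(\eta)}$, which is $\mathcal{C}^1$ in $\eta$ with $\tau(0)=1$ (Lemma~\ref{lemma3.3}(iii) composed with the $\mathcal{C}^1$ path $\eta\mapsto v(\eta)$), and putting $\Phi(t,\eta):=I(t\star v(\eta))$ so that $\beta(\eta)=\Phi(\tau(\eta),\eta)$, the maximality relation $\partial_t\Phi(\tau(\eta),\eta)=\Psi'_{v(\eta)}(\tau(\eta))=0$ kills the $\tau'$ term in the chain rule and yields
$$\beta'(0)=\partial_\eta\Phi(1,0)=\tfrac{d}{d\eta}\big|_{0}I(v(\eta))=\big\langle I'(u^-),\,\phi+\psi'(0)\omega\big\rangle.$$
Setting this to zero and inserting $\psi'(0)$ gives $\langle I'(u^-),\phi\rangle+\lambda^-\int_{\R^N}u^-\phi\,dx=0$ with $\lambda^-:=-\langle I'(u^-),\omega\rangle/\int_{\R^N}u^-\omega\,dx$, which is independent of $\phi$. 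In the interior case the construction is simpler: for $|u^-|_2^2<a$ the perturbations $v(\eta):=u^-+\eta\phi$ stay in $D_a\setminus\{0\}$ for small $\eta$, so $\beta(\eta):=\max_{t>0}I(t\star v(\eta))\ge M_a^-=\beta(0)$ forces $\beta'(0)=\langle I'(u^-),\phi\rangle=0$, i.e.\ $\lambda^-=0$.

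The step I expect to be the main obstacle is the envelope computation, which requires joint $\mathcal{C}^1$ regularity of $(t,\eta)\mapsto I(t\star v(\eta))$ and of $\eta\mapsto t_{v(\eta)}$; the delicate ingredient is the differentiability of $V(v(\eta))$ in $\eta$, which is exactly where the quasilinear non-differentiability of $I$ on $X$ enters. The resolution is that $v(\eta)$ differs from $u^-$ only by the fixed $\mathcal{C}_0^\infty$ functions $\phi,\omega$, so the one-sided directional derivatives defining $\langle I'(\cdot),\cdot\rangle$ exist and depend continuously on $\eta$, and the explicit polynomial-in-$t$ form of $I(t\star v)$ in \eqref{1.8} makes the $t$-dependence smooth; thus the chain rule above is legitimate. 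A secondary point to verify is that $t_{v(\eta)}\star v(\eta)\in\mathcal{P}_a^-$, ensuring Lemma~\ref{lemma3.100} can be invoked to guarantee $\beta(\eta)\ge M_a^-$.
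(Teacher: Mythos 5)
Your proposal is correct, but it follows a genuinely different route from the paper. The paper's own proof of Lemma~\ref{lemma3.8} is a short reduction argument: setting $b:=|u^-|_2^2\in(0,a]$, it uses the inclusions $\mathcal{P}^-\cap\mathcal{S}_b\subset\mathcal{P}_b^-\subset\mathcal{P}_a^-$ to obtain the chain $M_a^-=I(u^-)\geq\inf_{\mathcal{P}^-\cap\mathcal{S}_b}I\geq M_b^-\geq M_a^-$, concludes that $u^-$ minimizes $I$ on the \emph{fixed-mass} constraint $\mathcal{P}^-\cap\mathcal{S}_b$, and then outsources the construction of the Lagrange multiplier to the argument of \cite[Lemma 2.5]{ZL}. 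You instead stay on the relaxed constraint $D_a$ and give a self-contained argument: you use the minimax characterization $M_a^-=\inf_{v\in D_a\setminus\{0\}}\max_{t>0}I(t\star v)$ from Lemmas~\ref{lemma3.90} and~\ref{lemma3.100} (in fact Lemma~\ref{lemma3.3}(ii) alone already gives $\beta(\eta)\geq M_a^-$, since $t_{v(\eta)}\star v(\eta)\in\mathcal{P}_a^-$), replicate the Case~1/Case~2 implicit-function-theorem construction of Lemma~\ref{lemma3.800} to respect the mass constraint, and differentiate the max-functional by an envelope computation. That computation is legitimate for exactly the reasons you flag: by Lemma~\ref{lemma3.3} the maximizer $t_u$ is unique and non-degenerate ($\Psi''_{u^-}(1)<0$ since $u^-\in\mathcal{P}_a^-$ and $\mathcal{P}_a^0=\emptyset$), so the implicit function theorem applied to $(t,\eta)\mapsto\Psi'_{v(\eta)}(t)$ makes $\eta\mapsto t_{v(\eta)}$ of class $\mathcal{C}^1$, while the coefficients $|\nabla v(\eta)|_2^2$, $V(v(\eta))$, $|v(\eta)|_p^p$, $|v(\eta)|_q^q$ are $\mathcal{C}^1$ in $\eta$ because the perturbation directions $\phi,\omega$ are fixed $\mathcal{C}_0^\infty$ functions (the quasilinear non-differentiability of $V$ on $X$ does not obstruct directional derivatives along such directions). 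What your approach buys is self-containedness and structural parallelism with the paper's treatment of $u^+$, together with the explicit dichotomy $\lambda^-=0$ when $|u^-|_2^2<a$; what the paper's approach buys is brevity, at the price of relying on an external reference for the key step. Two cosmetic remarks: your notation $\tau(\eta)$ for $t_{v(\eta)}$ clashes with the parameter $\tau$ in the equation and should be renamed, and your invocation of Lemma~\ref{lemma3.100} can be replaced by the more elementary observation that $t_{v(\eta)}\star v(\eta)\in\mathcal{P}_a^-$ together with the definition of $M_a^-$.
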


\begin{proof}
Since $M_a^- > 0$, we have that $u^- \neq 0$. Setting  $b := \|u^-\|_2^2$, it is easy to see  that $0 < b \leq a$. Define
$$
\tilde{M}_{b}^- := \inf_{v \in \mathcal{P}^- \cap \mathcal{S}_b} I(v).
$$
Clearly $\tilde{M}_{b}^- \geq M_b^-$. Combining   $u^- \in \mathcal{P}^- \cap \mathcal{S}_b$, $b \leq a$, and the monotonicity of $c \mapsto M_c^-$, we obtain
$$
M_a^- = I(u^-) \geq \tilde{M}_{b}^- \geq M_{b}^- \geq M_a^-.
$$
Thus $I(u^-) = \tilde{M}_{b}^-$. Using a similar method  as that  in \cite[Lemma 2.5]{ZL}, there exists $\lambda^- \in \mathbb{R}$ such that
$$
\langle I'(u^-), \phi\rangle + \lambda^- \int_{\mathbb{R}^N} u^- \phi  dx = 0 \quad \text{for all} \quad \phi \in C_0^\infty(\mathbb{R}^N).
$$
\end{proof}


\subsection{Properties of Lagrange multiplier}
In this subsection, by studying the properties of $\lambda^\pm$, we conclude that the minimizers $u^+$ and $u^-$ belong to $\mathcal{S}_a$.

\begin{lemma}\label{lemma3.9}
Let $N \geq 1$, $2 < q < 2 + \frac{4}{N} < 4 + \frac{4}{N} < p < 2 \cdot 2^*$, and  $\lambda^\pm$ be given by Lemmas \ref{lemma3.800} and \ref{lemma3.8}. Then $\lambda^\pm \geq 0$. Moreover,  if $\lambda^\pm \neq  0$, then  $|u^\pm|_2^2=a$.
\end{lemma}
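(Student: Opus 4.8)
The plan is to determine the sign of $\lambda^\pm$ by testing the weak equations from Lemmas~\ref{lemma3.800} and \ref{lemma3.8} against $u^\pm$ itself and then comparing the resulting Nehari-type identity with the behaviour of $I$ along the amplitude ray $\theta\mapsto\theta u^\pm$, which is the direction transverse to the mass-preserving scaling $t\star u$ used to construct $\mathcal{P}_a^\pm$. First I would justify the choice $\phi=u^\pm$ by approximation (the minimizers are radial, decaying, and enjoy the regularity of weak solutions, so the quasi-linear term remains integrable when paired with $u^\pm$), obtaining
\begin{equation*}
\langle I'(u^\pm),u^\pm\rangle+\lambda^\pm|u^\pm|_2^2=0 ,
\end{equation*}
in which $\langle I'(u^\pm),u^\pm\rangle=|\nabla u^\pm|_2^2+4V(u^\pm)-|u^\pm|_p^p-\tau|u^\pm|_q^q$. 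Since $I(\theta u)=\tfrac12\theta^2|\nabla u|_2^2+\theta^4V(u)-\tfrac1p\theta^p|u|_p^p-\tfrac{\tau}{q}\theta^q|u|_q^q$, this is precisely $\frac{d}{d\theta}I(\theta u^\pm)|_{\theta=1}$, so $\lambda^\pm|u^\pm|_2^2=-\frac{d}{d\theta}I(\theta u^\pm)|_{\theta=1}$. As $|u^\pm|_2^2>0$, everything reduces to showing this derivative is $\le 0$, and to tracking when it vanishes.

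For $u^+$ I would exploit that, by Lemmas~\ref{lemma3.5} and \ref{lemma3.7}, $u^+$ realizes $\inf_{A_{R_0}}I$. Because $V(\theta u^+)=\theta^4V(u^+)<R_0$ and $|\theta u^+|_2^2=\theta^2|u^+|_2^2$, the function $\theta u^+$ stays in $A_{R_0}$ for all $\theta\in(0,1]$ when $|u^+|_2^2=a$, and for $\theta$ in a full neighbourhood of $1$ when $|u^+|_2^2<a$. Setting $\phi(\theta):=I(\theta u^+)$, minimality gives $\phi(\theta)\ge\phi(1)$ on the admissible range. When $|u^+|_2^2=a$, the point $\theta=1$ is a right-endpoint minimum, so $\phi'(1)\le 0$ and hence $\lambda^+\ge 0$; when $|u^+|_2^2<a$, it is an interior minimum, so $\phi'(1)=0$ and $\lambda^+=0$ (consistent with Case~2 of Lemma~\ref{lemma3.800}).

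For $u^-$, which is of mountain-pass type, I would replace direct minimality by the characterization $M_a^-=\inf_{v\in D_a\setminus\{0\}}\max_{t>0}I(t\star v)$ of Lemma~\ref{lemma3.100}. Writing $b:=|u^-|_2^2\le a$ and using $t\star(\theta u^-)=\theta\,(t\star u^-)$, I define $\Phi(\theta):=\max_{t>0}I(t\star(\theta u^-))$, which is admissible for $\theta\in(0,\sqrt{a/b}\,]$. Since the fibre maximizer $t_{\theta u^-}\star(\theta u^-)$ lies in $\mathcal{P}_a^-$ by Lemma~\ref{lemma3.3}(ii), we have $\Phi(\theta)\ge M_a^-=\Phi(1)$. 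As $\theta\mapsto t_{\theta u^-}$ is $\mathcal{C}^1$ by Lemma~\ref{lemma3.3}(iii), the envelope theorem gives $\Phi'(1)=\frac{d}{d\theta}I(\theta u^-)|_{\theta=1}=\langle I'(u^-),u^-\rangle$. The same dichotomy then applies: $b=a$ makes $\theta=1$ a right-endpoint minimum and forces $\lambda^-\ge 0$, while $b<a$ makes it interior and forces $\lambda^-=0$.

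In all cases $\lambda^\pm\ge 0$, which is the first claim. Moreover, both arguments show that $|u^\pm|_2^2<a$ implies $\lambda^\pm=0$; the contrapositive yields that $\lambda^\pm\ne 0$ forces $|u^\pm|_2^2=a$, which is the ``moreover'' part. The step I expect to be the most delicate is the treatment of $u^-$: one must rigorously differentiate the max-value $\Phi$ via the envelope theorem, which hinges on the $\mathcal{C}^1$ dependence of the fibre maximizer recorded in Lemma~\ref{lemma3.3}(iii); and, common to both cases, one must legitimize testing the non-differentiable quasi-linear functional against $u^\pm$ itself, the only genuinely non-routine point there being the integrability of the quasi-linear term $\nabla(u^\pm)^2\cdot\nabla\big((u^\pm)^2\big)$.
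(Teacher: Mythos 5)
Your proof is correct and follows essentially the same route as the paper. The paper also perturbs along the amplitude direction, writing $(1+r)u^\pm$ for your $\theta u^\pm$, projects back onto $\mathcal{P}_a^\pm$ through the fiber maps $s_{u^+}(r)$ and $t_{u^-}(r)$ (made $\mathcal{C}^1$ by the implicit function theorem applied to \eqref{3.23}--\eqref{3.24}), and compares $\rho_\pm(r)$ with $M_a^\pm$; its computation \eqref{3.27}, in which the bracket multiplying $s'_{u^+}(r)$ vanishes because of the Pohozaev constraint, is exactly your envelope-theorem step, and it concludes, as you do, from $\lambda^\pm=-\rho'_\pm(0)/|u^\pm|_2^2$ together with the interior/endpoint dichotomy \eqref{3.26}. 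The only genuine difference is your handling of $u^+$: you bypass the projection onto $\mathcal{P}_a^+$ and instead use that the ray $\theta u^+$ itself stays in $A_{R_0}$ (legitimate, since $V(u^+)<R_0$ strictly and $V(\theta u^+)=\theta^4V(u^+)$), so that minimality of $u^+$ over $A_{R_0}$ from Lemmas \ref{lemma3.5} and \ref{lemma3.800} directly gives the sign of $\frac{d}{d\theta}I(\theta u^+)\big|_{\theta=1}$; this is slightly more elementary than the paper's uniform fiber-projection treatment of both signs, at no loss of rigor. Finally, the point you flag as delicate --- testing the equation against $u^\pm$ itself to obtain the Nehari identity --- is invoked by the paper with no more justification than you give (the identity preceding \eqref{3.29}), so your proposal is not missing anything the paper supplies.
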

\begin{proof}
Given $0 < |u^\pm|_2^2 \leq a$, choose $\epsilon > 0$ small enough  such that $(1+r)u^\pm \in D_a$ for all $r \in \gamma$, where
$$
\gamma :=
\begin{cases}
(-\epsilon, \epsilon) & \text{if } |u^\pm|_2^2 < a, \\
(-\epsilon, 0] & \text{if } |u^\pm|_2^2 = a.
\end{cases}
$$
For any $r \in \gamma$, consider the scaled energy
\begin{align*}
I(t \star (1+r)u^\pm)& = \frac{1}{2}|\bi u^\pm|_2^2(1+r)^2t^2 + V(u^\pm)(1+r)^4t^{N+2} \\
&-\frac{1}{p}|u^\pm|_p^p(1+r)^p t^{\frac{N(p-2)}{2}} - \frac{\tau}{q}|u^\pm|_q^q(1+r)^q t^{\frac{N(q-2)}{2}}.
\end{align*}
By Lemma \ref{lemma3.3}, there exist a unique $s_{u^+}(r) > 0$ and a unique $t_{u^-}(r) > 0$ such that
$$
I(s_{u^+}(r) \star ((1+r)u^+)) = \min_{0 < t \leq s_{u^+}} I(t \star ((1+r)u^+))
$$
and
$$
I(t_{u^-}(r) \star ((1+r)u^-)) = \max_{s_{u^-} < t \leq t_{u^-}} I(t \star ((1+r)u^-)),
$$
where $s_{u^+}(r)$ and $t_{u^-}(r)$ solve the following equations
\begin{equation}\label{3.23}
\begin{aligned}
&|\bi u^+|_2^2(1+r)^2 + (N+2)V(u^+)(1+r)^4 s^N_{u^+}(r) \\
&- \frac{N(p-2)}{2p}|u^+|_p^p(1+r)^p s^{\frac{N(p-2)-4}{2}}_{u^+}(r) - \frac{N\tau(q-2)}{2q}|u^+|_q^q(1+r)^q s^{\frac{N(q-2)-4}{2}} _{u^+}(r)= 0
\end{aligned}
\end{equation}
and
\begin{equation}\label{3.24}
\begin{aligned}
&|\bi u^-|_2^2(1+r)^2 + (N+2)V(u^-)(1+r)^4 t^N_{u^-}(r) \\
&- \frac{N(p-2)}{2p}|u^-|_p^p(1+r)^p t^{\frac{N(p-2)-4}{2}}_{u^-}(r) - \frac{N\tau(q-2)}{2q}|u^-|_q^q(1+r)^q t^{\frac{N(q-2)-4}{2}}_{u^-}(r) = 0.
\end{aligned}
\end{equation}
The implicit function theorem and Lemma \ref{lemma3.3} imply
\begin{equation}\label{3.25}
s_{u^+}(r),\, t_{u^-}(r) \in \mathcal{C}^2(\mathbb{R}), ~ s_{u^+}(0) = 1, ~ t_{u^-}(0) = 1.
\end{equation}

Define the functions
$$
\rho_+(r) := I(s_{u^+}(r) \star ((1+r)u^+)), ~ r \in \gamma
$$
and
$$
\rho_-(r) := I(t_{u^-}(r) \star ((1+r)u^-)), ~ r \in \gamma.
$$
Since $s_{u^+}(r) \star ((1+r)u^+) \in \mathcal{P}_a^+$, $t_{u^-}(r) \star ((1+r)u^-) \in \mathcal{P}_a^-$, and $I(u^\pm) = M_a^\pm$, we have that
$$
\rho_+(r) \geq M_a^+ = I(u^+) = \rho_+(0), ~~ \rho_-(r) \geq M_a^- = I(u^-) = \rho_-(0).
$$
Thus,
\begin{equation}\label{3.26}
\rho'_\pm(0)
\begin{cases}
= 0 & \text{if } |u^\pm|_2^2 < a \text{ (interior point)}, \\
\leq 0 & \text{if } |u^\pm|_2^2 = a \text{ (boundary point)},
\end{cases}
\end{equation}
where $\rho'_\pm(0)$ denotes the left derivative when $|u^\pm|_2^2 = a$.

For $\rho_+(r)$, based on  \eqref{3.23} and the expression
\begin{align*}
\rho_+(r) =& \frac{1}{2}|\bi u^+|_2^2(1+r)^2 s^2_{u^+}(r) + V(u^+)(1+r)^4 s^{N+2}_{u^+}(r) \\
&- \frac{1}{p}|u^+|_p^p(1+r)^p s^{\frac{N(p-2)}{2}}_{u^+}(r) - \frac{\tau}{q}|u^+|_q^q(1+r)^q s^{\frac{N(q-2)}{2}}_{u^+}(r),
\end{align*}
direct computation implies that
\begin{align}\label{3.27}
\rho'_+(r) &= |\bi u^+|_2^2(1+r) s^2_{u^+}(r) + 4V(u^+)(1+r)^3 s^{N+2}_{u^+}(r) \\ \notag
&\quad - |u^+|_p^p(1+r)^{p-1} s^{\frac{N(p-2)}{2}}_{u^+}(r) - \tau |u^+|_q^q (1+r)^{q-1} s^{\frac{N(q-2)}{2}}_{u^+}(r) \\ \notag
&\quad + s_{u^+}(r)s'_{u^+}(r) \Bigg[ |\bi u^+|_2^2(1+r)^2 + (N+2)V(u^+)(1+r)^4 s^N _{u^+}(r)\\ \notag
&\quad - \frac{N(p-2)}{2p}|u^+|_p^p(1+r)^p s^{\frac{N(p-2)-4}{2}}_{u^+}(r) - \frac{\tau N(q-2)}{2q}|u^+|_q^q(1+r)^q s^{\frac{N(q-2)-4}{2}}_{u^+}(r) \Bigg] \\ \notag
&= |\bi u^+|_2^2(1+r) s^2_{u^+}(r) + 4V(u^+)(1+r)^3 s^{N+2}_{u^+}(r) \\ \notag
&\quad - |u^+|_p^p(1+r)^{p-1} s^{\frac{N(p-2)}{2}}_{u^+}(r) - \tau |u^+|_q^q (1+r)^{q-1} s^{\frac{N(q-2)}{2}}_{u^+}(r).
\end{align}

It follows from \eqref{3.25} and \eqref{3.27} that
\begin{equation}\label{3.28}
\rho'_+(0) = |\bi u^+|_2^2 + 4V(u^+) - |u^+|_p^p - \tau |u^+|_q^q.
\end{equation}
By Lemma \ref{lemma3.800}, we have that
$$
|\bi u^+|_2^2 + 4V(u^+) + \lambda^+|u^+|_2^2 = |u^+|_p^p + \tau |u^+|_q^q,
$$
which,together  with \eqref{3.28},  yields that
\begin{equation}\label{3.29}
\lambda^+ = -\frac{\rho'_+(0)}{|u^+|_2^2}.
\end{equation}
Similarly, we can  deduce that
\begin{equation}\label{3.30}
\lambda^- = -\frac{\rho'_-(0)}{|u^-|_2^2}.
\end{equation}
Following from \eqref{3.26}, \eqref{3.29}, and \eqref{3.30}, we conclude that $\lambda^\pm \geq 0$. When  $\lambda^\pm \neq  0$, we can conclude that $\rho'_\pm(0)\neq 0$,
 which, together with \eqref{3.26}, implies that $|u^\pm|_2^2 =a$.
\end{proof}

%

\begin{lemma}\label{lemma3.11}
Assume that $1 \leq N \leq 4$, and  $\lambda^\pm$ and $u^\pm$  are given by Lemmas \ref{lemma3.800} and \ref{lemma3.8}.  We have that $\lambda^\pm > 0$.
\end{lemma}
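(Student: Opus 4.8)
The plan is to argue by contradiction, reducing everything to the impossibility of $\lambda^\pm=0$. By Lemma~\ref{lemma3.9} we already know $\lambda^\pm\geq 0$, so it suffices to exclude the value $0$. Assume $\lambda^+=0$ (the case $\lambda^-=0$ is parallel). Then Lemma~\ref{lemma3.800} shows that $u^+$ is a weak solution of \eqref{6.30.1}, and since $u^+$ was produced by Schwarz symmetrization in Lemma~\ref{lemma3.7}, it is nonnegative, radially symmetric and nontrivial. Thus $\lambda^+=0$ would force \eqref{6.30.1} to admit a nonnegative nontrivial solution sitting in $A_{R_0}$, and the goal becomes to show this cannot happen when $1\leq N\leq 4$ and $a,\tau$ obey \eqref{1.12}.

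First I would record the two identities such a solution must satisfy. Testing \eqref{6.30.1} against $u^+$ gives the Nehari-type relation $|\bi u^+|_2^2+4V(u^+)=|u^+|_p^p+\tau|u^+|_q^q$, while $u^+\in\mathcal{P}_a$ gives the Pohozaev relation $|\bi u^+|_2^2+(N+2)V(u^+)=\gamma_p|u^+|_p^p+\tau\gamma_q|u^+|_q^q$. Forming $\gamma_p\cdot(\text{Nehari})-(\text{Pohozaev})$ eliminates $|u^+|_p^p$ and yields
$$(\gamma_p-1)|\bi u^+|_2^2+(4\gamma_p-N-2)V(u^+)=(\gamma_p-\gamma_q)\tau|u^+|_q^q.$$
Since $p<2\cdot2^*$ forces $4\gamma_p<N+2$ and $p>q$ gives $\gamma_p>\gamma_q$, the right-hand side is strictly positive; hence whenever $\gamma_p\leq 1$, i.e. $p\leq 2^*$, the left-hand side is strictly negative, a contradiction. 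This disposes of all of $N=1,2$ (where $\gamma_p<1$) and of $N=3$ with $p\leq 2^*$.

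For the complementary range $\gamma_p>1$ (the rest of $N=3$ and all of $N=4$) the two identities are mutually consistent, so I would instead take $(\text{Pohozaev})-(\text{Nehari})$ to obtain $(N-2)V(u^+)=(\gamma_p-1)|u^+|_p^p+(\gamma_q-1)\tau|u^+|_q^q<(\gamma_p-1)|u^+|_p^p$, and then insert the Gagliardo--Nirenberg inequality \eqref{2.4} with $|u^+|_2^2\leq a$. Using $p\gamma_p>N+2$, this produces
$$\frac{N-2}{\gamma_p-1}<C_{N,p}\,a^{\frac{4N-p(N-2)}{2(N+2)}}\,V(u^+)^{\frac{p\gamma_p}{N+2}-1},$$
a strict lower bound $V(u^+)>V_\ast$ with $V_\ast=V_\ast(N,p,a,\tau)$ increasing as $a\downarrow0$. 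Since $u^+\in A_{R_0}$ forces $V(u^+)<R_0$, the plan is to show that \eqref{1.12} makes $a$ small enough that $V_\ast\geq R_0$, contradicting $V(u^+)<R_0$; equivalently, that any nonnegative solution of \eqref{6.30.1} must carry more $L^2$-mass than \eqref{1.12} permits.

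For $u^-$ the same manipulation again forces a lower bound on $V(u^-)$, but the extra point is that one first needs an upper bound on $V(u^-)$, which is \emph{not} the membership $V<R_1$ (this may fail on $\mathcal{P}_a^-$); rather it follows from the finiteness of the minimax level, since writing $I(u^-)=I(u^-)-\tfrac{2}{N(p-2)}P(u^-)$ and using \eqref{2.4} gives
$$M_a^-\geq\Big(1-\tfrac{2(N+2)}{N(p-2)}\Big)V(u^-)-\tfrac{\tau(p-q)C_{N,q}}{q(p-2)}\,a^{\frac{4N-q(N-2)}{2(N+2)}}\,V(u^-)^{\frac{q\gamma_q}{N+2}},$$
with both displayed coefficients positive and $\tfrac{q\gamma_q}{N+2}<1$, so that $V(u^-)$ is bounded above in terms of $M_a^-,a,\tau$. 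I expect the main obstacle to be the resulting mass comparison: converting \eqref{1.12} into the quantitative assertion that \eqref{6.30.1} has no nonnegative nontrivial solution of $L^2$-norm $\leq a$. It is precisely in making this comparison conclusive that the hypothesis $1\leq N\leq 4$ is used, and the argument ceases to close for $N\geq 5$, in accordance with the Remark that this range remains open.
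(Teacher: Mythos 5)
Your reduction to excluding $\lambda^\pm=0$ is the right start, and your first case is complete and correct: when $\gamma_p\le 1$ (all of $N=1,2$, and $N=3$ with $p\le 2^*$) the combination $\gamma_p\cdot(\text{Nehari})-(\text{Pohozaev})$ gives a genuine sign contradiction. Beyond that, however, there are real gaps. For $u^+$ in the range $\gamma_p>1$, the decisive step --- that \eqref{1.12} forces $V_\ast\ge R_0$ --- is never proved; you announce it as a plan and yourself call it the main obstacle. (It can in fact be closed, but not by ``making $a$ small enough'': \eqref{1.12} is not a smallness condition on $a$ alone, since it can hold for arbitrarily large $a$ with $\tau$ small. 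The workable route is to compare $V_\ast$ with the maximizer $\bar s$ of $f$ in Lemma \ref{lemma3.1}: one checks that $V_\ast\ge\bar s$ is equivalent to the algebraic inequality $(N-2)(p\gamma_p-q\gamma_q)\ge p(\gamma_p-1)(N+2-q\gamma_q)$, which holds on the whole admissible range of $(p,q)$, and under \eqref{1.12} one has $g(\bar s)>0$, hence $R_0<\bar s\le V_\ast<V(u^+)$, contradicting $u^+\in A_{R_0}$. This is exactly the computation missing from your write-up.) More seriously, the $u^-$ case does not close at all: $u^-$ need not satisfy $V(u^-)<R_0$, and your substitute upper bound on $V(u^-)$ is expressed through the unknown level $M_a^-$, which you neither estimate nor compare with $V_\ast$; as written, no contradiction is reached for $u^-$ whenever $\gamma_p>1$.

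Finally, your diagnosis of where $1\le N\le 4$ enters is incorrect, and this is the key divergence from the paper. In your scheme the dimension never genuinely appears (the algebra above is uniform in $N\ge3$), so it could not be ``precisely'' what makes the comparison conclusive. The paper's proof uses none of \eqref{1.12}, $R_0$, or mass comparisons: assuming $\lambda^\pm=0$, it applies the dual change of variables $u^\pm=\varphi(v^\pm)$, where $\varphi$ is the inverse of $v(s)=\int_0^s\sqrt{1+2t^2}\,dt$, observes that $v^\pm\in H^1(\R^N)$ is nonnegative, nontrivial and superharmonic ($-\Delta v^\pm\ge 0$), and invokes the Liouville-type result of \cite{IN} (Lemma A.2 there), which excludes such functions exactly when $1\le N\le 4$. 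That argument disposes of $u^+$ and $u^-$ simultaneously and is the sole source of the dimension restriction; your proposal, by contrast, leaves the $u^-$ case open and misattributes the role of the hypothesis.
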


\begin{proof}
It follows from Lemma \ref{lemma3.9} that  $\lambda^\pm \geq 0$.  Suppose, by   contradiction,  that $\lambda^\pm = 0$. Then $u^\pm \geq 0$ with $u^\pm \not\equiv 0$ solves
$$
-\Delta u - \Delta(|u|^2)u = |u|^{p-2}u + \tau |u|^{q-2}u \quad \text{in} \quad \mathbb{R}^N.
$$
Define $u^\pm = \varphi(v^\pm)$, where $\varphi$ is the inverse of the function
$$
v(s) = \int_0^s \sqrt{1 + 2t^2}  dt = \frac{1}{2} s \sqrt{1 + 2s^2} + \frac{\sqrt{2}}{4} \ln \left( \sqrt{2} s + \sqrt{1 + 2s^2} \right).
$$
Then $v^\pm \in H^1(\mathbb{R}^N)$ are Schwarz symmetric and satisfy
$$
-\Delta v^\pm = \varphi(v^\pm)^{p-1} \varphi'(v^\pm) + \tau \varphi(v^\pm)^{q-1} \varphi'(v^\pm) \geq 0 ~ \text{in} ~ \mathbb{R}^N,
$$
which is a contradiction (see \cite[Lemma A.2]{IN}) when  $1 \leq N \leq 4$.  Therefore  $\lambda^\pm >0$.
\end{proof}

\begin{proof}[\bf Proof of Theorem \ref{Th1}:]
Let $a, \tau > 0$ satisfy \eqref{1.12}. By Lemma \ref{lemma3.7}, we can see that $M_a^\pm$ are attained by radially symmetric functions $u^\pm \in \mathcal{P}_a^\pm$.

(1) Lemmas \ref{lemma3.800}, \ref{lemma3.9} and   \ref{lemma3.11} imply that $u^+$ is a weak solution of \eqref{1.1}, $\lambda^+ > 0$ and $u^+ \in \mathcal{S}_a$. Also, we have $I(u^+)=\inf\limits_{v\in\mathcal{P}_a}I(v)$. Thus $u^+$ is a normalized ground state solution.

(2) Similarly, it follows from Lemmas \ref{lemma3.8}, \ref{lemma3.9} and   \ref{lemma3.11}   that $u^-$ is a weak solution of \eqref{1.1} with  $\lambda^- > 0$ and $u^- \in \mathcal{S}_a$. Furthermore, by Lemmas \ref{lemma3.80}, \ref{lemma3.90} and \ref{lemma3.100}, we can see that there exists a  mountain pass geometry structure for $I|_{\mathcal{P}_a^-}$ at level $m^*$. Thus we obtain that  $u^- \in \mathcal{P}_a^-$ satisfies $I(u^-) = M_a^- = m^*$, which implies that $u^-$ is a normalized mountain pass solution of  \eqref{1.1}.

We complete the proof.
\end{proof}

\end{document}